\documentclass[11pt,a4paper]{amsart}
\usepackage{amssymb,amsmath,amsthm,graphicx}
\usepackage{tikz}
\usepackage{tikz-cd}
\usepackage{enumitem}
\usepackage{amsaddr}
\usepackage{etoolbox}
\usepackage{upgreek}\usetikzlibrary{matrix}
\usepackage[utf8]{inputenc}

\usepackage{thmtools,thm-restate}
\usepackage{float}
\usetikzlibrary{decorations.pathmorphing}
\usepackage{subcaption}
\usepackage{mathrsfs}
\usetikzlibrary{fit, shapes.geometric}
\usetikzlibrary{backgrounds}

\usepackage{hyperref}
\usepackage{cleveref}
\hypersetup{hypertexnames=false}
\usepackage{quiver}

\newcommand{\C}{\mathcal}
\newcommand{\s}{\mathsf}
\newcommand{\f}{\mathfrak}
\newcommand{\R}{\mathrm}

\newcommand{\ang}[1]{\langle#1\rangle}

\newcommand{\St}{\f{St}(\Lambda)}

\newcommand{\rightword}{\s{\overrightarrow{\s W}(M)}}

\newcommand{\leftword}{\s{\overleftarrow{\s W}(M)}}
\newcommand{\Apm}{\s A^{\pm}}
\newcommand{\Adpm}{{(\s A')}^{\pm}}

\newcommand{\weakbrick}[1]{\s{Br}^{\R{weak}}(\s #1)}
\newcommand{\Ba}{\f{Ba}(\Lambda)}

\newcommand{\INF}[1]{\,^\infty{#1}^\infty}

\newcounter{thmcount}
\newtheorem{defn}{Definition}[section]

\newtheorem{lem}[defn]{Lemma}

\newtheorem{prop}[defn]{Proposition}
\newtheorem*{prop*}{Proposition}
\newtheorem{thm}[defn]{Theorem}

\newtheorem{cor}[defn]{Corollary}

\newtheorem*{claim*}{Claim}

\newtheorem{exmp}[defn]{Example}
\AtEndEnvironment{exmp}{\hfill$\Diamond$}
\newtheorem{rmk}[defn]{Remark}

\numberwithin{equation}{section}

\title{An automata-based test for bricks over string algebras}
\author{Amit Kuber and Annoy Sengupta}

\address{Department of Mathematics and Statistics, Indian Institute of Technology Kanpur, Kanpur \\ 208016, Uttar Pradesh, India}
\email{askuber@iitk.ac.in, sengupta.annoy44@gmail.com}

\date{}

\keywords{String algebra, Brick, Automaton, Word theory, Sturmian}
\subjclass[2020]{16G10, 16G20, 16D80, 68R15, 05A05, 68Q45, 03D05}

\begin{document}

\begin{abstract}
Motivated by the recent work of Deaconu, Mousavand and Paquette on the connection between infinite string bricks for certain gentle algebras and Sturmian words, we develop a decorated version of a deterministic automaton, called a \emph{multi-entry inverse automaton} (\emph{MIA}, for short) that accepts pointed words. We then associate an MIA $\s M_{\Lambda\delta}$ over $\{0,1\}$ to a string algebra $\Lambda$, and show that strings over $\Lambda$ can be viewed as certain equivalence classes of the pointed words accepted by $\s M_{\Lambda\delta}$. By defining \emph{(weak) brick words} over this MIA, we show that a finite/infinite string module (resp. band module) is a brick if and only if every word in the associated equivalence class of pointed binary words is a brick word (resp. a weak brick word) over $\s M_{\Lambda\delta}$. The result of Deaconu et al. follows as an immediate consequence.
\end{abstract}

\maketitle

\section{Introduction}
Fix an algebraically closed field $\C K$. One of the fundamental problems in the study of the representation theory of a finite-dimensional algebra over $\C K$ is the classification of indecomposable modules. For the class of string algebras, originally introduced by Butler and Ringel \cite{butler1987auslander} following the work of Gelfand and Ponomarev \cite{gel1968indecomposable}, the classification of finite-dimensional indecomposable modules is complete: every finite-dimensional indecomposable module is either a string module or a band module. These modules are constructed via purely combinatorial data derived from the underlying quiver and its relations.

Recently, there has is a flurry of research regarding \emph{bricks} \cite{mousavand2025bricks,mousavand2025biserial,dequêne2023generalization,sengupta2024characterisation,kaveh_sturmian}, which are modules having the endomorphism algebra isomorphic to the base field. By definition, bricks are indecomposable. Owing to this rigidity, the classification of bricks is often more tractable than the classification of all indecomposable modules, making them a natural first step toward understanding the category of representations.

\begin{figure}[h]
    \centering
    \[\begin{tikzcd}
v_1 & v_2 \arrow[l, "b_1", bend left, shift left=2] \arrow[l, "a_1"', bend right, shift right=2] & v_3 \arrow[l, "b_2", bend left, shift left=2] \arrow[l, "a_2"', bend right, shift right=2] & \cdots \arrow[l, "a_3"', bend right, shift right=2] \arrow[l, "b_3", bend left, shift left=2] & v_{N-1} \arrow[l, "a_{N-2}"', bend right, shift right=2] \arrow[l, "b_{N-2}", bend left, shift left=2] & v_N \arrow[l, "b_{N-1}", bend left, shift left=2] \arrow[l, "a_{N-1}"', bend right, shift right=2]
\end{tikzcd}\]
    \caption{$\Lambda_N$ with $\rho=\{a_1b_2,a_2b_3,\cdots,a_{N-2}b_{N-1},b_1a_2,b_2a_3,\cdots,b_{N-2}a_{N-1}\}$}
    \label{fig:lambda_n1}
\end{figure}
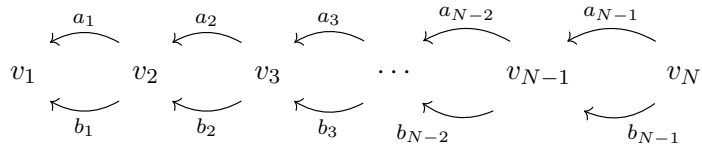

In the direction of the classification of bricks over string algebras, a surprising connection was established between some band bricks over a certain class $\Lambda_N$ of gentle algebras (\Cref{fig:lambda_n1}) and some words called \emph{perfectly clustering words} over a linearly ordered alphabet by Dequ\^ene et al. \cite{dequêne2023generalization}. This connection was generalised by the authors \cite{sengupta2024characterisation} to give a word-theoretic characterisation of all the band bricks over a large class of string algebras that includes gentle algebras. Very recently, the study of infinite-dimensional bricks has been advanced by Deaconu et al. \cite{kaveh_sturmian}, where they characterised infinite string bricks over the double Kronecker algebra $\Lambda_3$ (\Cref{fig:lambda_n1}) in terms of Sturmian words. These developments collectively highlight a modern trend where the ``brickness" of a module—whether finite or infinite—is captured through the lens of word combinatorics.

The goal of this paper is to generalise the main result of \cite{kaveh_sturmian}. It is important to understand Sturmian words before we discuss our contributions. The \emph{complexity} of an infinite word $\s w$ over the alphabet $\{\s a,\s b\}$ refers to the number of its distinct subwords of length $n$ for each $n\in\mathbb N$, where $\mathbb N$ denotes the set of all positive integers. The complexity of a periodic word is absolutely bounded. \emph{Sturmian words} \cite[Chapter~2]{lothaire1997combinatorics} are infinite aperiodic words over $\{\s a,\s b\}$ with minimal complexity among all aperiodic words. The complexity of a Sturmian word is exactly $n+1$ for each $n\in\mathbb N$.

Geometrically, Sturmian words can be visualised as the ``cutting sequence'' of an irrationally-sloped line on the integer grid, where intersection with a vertical (resp. horizontal) line counts as an occurrence of the letter $\s a$ (resp. $\s b$). A \emph{characteristic Sturmian word} is the unique ``cutting sequence'' obtained by translating this line to pass through the origin. Here are the characterisations of Sturmian words and characteristic Sturmian words that are useful to study bricks.

\begin{prop}\label{defn: Sturmian word}
\cite[Propositions~2.1.3, 2.1.22]{lothaire1997combinatorics} An infinite aperiodic word $\s w$ over $\{\s a,\s b\}$ is Sturmian if there is no finite word $\s w'$ such that both $\s a\s w'\s a$ and $\s b\s w'\s b$ are subwords of $\s w$.
A right-infinite Sturmian word $\s w$ is a characteristic Sturmian word if both $\s a\s w$ and $\s b\s w$ are Sturmian.
\end{prop}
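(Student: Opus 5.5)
We will prove the two characterisations separately, working from the standard facts in \cite[Chapter~2]{lothaire1997combinatorics}: an infinite word is Sturmian if and only if it is aperiodic and \emph{balanced}, meaning that for any two factors $\s u,\s v$ of the same length the numbers of occurrences of $\s a$ in them differ by at most one. We also use that a balanced infinite word has complexity at most $n+1$, and that the set of factors of a Sturmian word is closed under reversal.

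For the first assertion we take $\s w$ aperiodic with no $\s w'$ such that both $\s a\s w'\s a$ and $\s b\s w'\s b$ are factors, and prove that $\s w$ is balanced by contraposition. If $\s w$ is unbalanced, we pick factors $\s u,\s v$ of minimal common length $n$ with $|\s u|_{\s a}-|\s v|_{\s a}\ge 2$. We then show, by induction on $n$ with the usual sliding argument, that minimality forces $\s u$ and $\s v$ to share the same interior. Concretely, comparing the first and last letters and using that the inner factors of length $n-2$ must already be balanced, one gets $\s u=\s a\s w'\s a$ and $\s v=\s b\s w'\s b$, which is a forbidden pair. So $\s w$ is balanced, and together with aperiodicity this gives complexity exactly $n+1$, i.e.\ $\s w$ is Sturmian. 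The converse also holds: if both $\s a\s w'\s a$ and $\s b\s w'\s b$ occur, they are equal-length factors with $\s a$-counts differing by $2$, so $\s w$ is not balanced. The statement is therefore really an equivalence.

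For the second assertion, recall that a right-infinite Sturmian word of slope $\alpha$ is characteristic exactly when it equals $c_\alpha$, the cutting sequence of the line through the origin. We use the fact that $c_\alpha$ is the unique word of its slope all of whose prefixes are left special, i.e.\ extendable on the left by both letters within the factor set. If both $\s a\s w$ and $\s b\s w$ are Sturmian, they have the same slope as $\s w$ (slope is the letter frequency), so each prefix $\s p$ of $\s w$ has both $\s a\s p$ and $\s b\s p$ as factors of the common language of slope $\alpha$. This language depends only on $\alpha$, so every prefix of $\s w$ is left special, and hence $\s w=c_\alpha$. Conversely, $\s a c_\alpha$ and $\s b c_\alpha$ are the two cutting sequences of lines through points infinitesimally below and above the origin, so both are Sturmian.

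The main obstacle is the minimality step in the first part: showing that a minimal unbalanced pair has the form $\s a\s w'\s a$, $\s b\s w'\s b$. We plan to handle it with Lothaire's Proposition 2.1.3 argument, peeling off first letters and comparing prefix $\s a$-counts.
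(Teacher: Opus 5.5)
Your proposal is correct and follows the same route as the Lothaire results that the paper cites in place of a proof. For the first part, a minimal-length unbalanced pair of factors is forced into the form $\mathsf a\mathsf w'\mathsf a$, $\mathsf b\mathsf w'\mathsf b$ (Proposition~2.1.3 together with ``Sturmian $\Leftrightarrow$ balanced and aperiodic''); for the second, $c_\alpha$ is the unique word of slope $\alpha$ all of whose prefixes are left special. One point needs tightening: balancedness of the length-$(n-2)$ interiors only gives them equal $\mathsf a$-counts, not equal letters. To conclude the interiors coincide you need the prefix-count comparison you mention, where minimality forces $|\mathsf u_{[1..k]}|_{\mathsf a}-|\mathsf v_{[1..k]}|_{\mathsf a}=1$ for every $1\le k\le n-1$.
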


Let $\s a:=b_1a_1^{-1}$, $\s b:=a_2^{-1}b_2$ be strings over $\Lambda_3$. Let $\f{S}(\s a,\s b)$ be the set of infinite strings over $\Lambda_3$ that could be written as infinite words over $\{\s a,\s b\}$. Now we are ready to state the main result of Deaconu et al.

\begin{thm}\cite[Theorem~4.10]{kaveh_sturmian}\label{thm: deaconu}
Let $\s w$ be an infinite word over $\{\s a,\s b\}$, and $\f{S}(\s w)\in\f{S}(\s a,\s b)$ be the corresponding infinite string.
\begin{enumerate}
    \item If $\s w$ is right-infinite, then $\s w$ is a characteristic Sturmian word if and only if the associated (direct sum) string module $M(\f{S}(\s w))$ is a brick.
    \item If $\s w$ is bi-infinite, then $\s w$ is a Sturmian word if and only if the associated (direct sum) string module $M(\f{S}(\s w))$ is a brick.
\end{enumerate}
\end{thm}

Replacing $\s a$ and $\s b$ by their ``parity words'' $\s d(\s a):=01$ and $\s d(\s b):=10$ respectively, the characterisation of a Sturmian word $\s w$ as given in \Cref{defn: Sturmian word} translates into the following statement: there does not exist a finite word $\s z$ over $\{0,1\}$ such that both $01\s z01$ and $10\s z10$ are subwords of $\s d(\s w)$. Note that the string corresponding to the word $\s z$ over $\{0,1\}$ is a cyclic string starting (and ending) at $v_2$, and that any infinite word over $\{0,1\}$ contains $1\s z 0$ (resp. $0\s z1$) as a subword if and only if it contains $01\s z01$ (resp. $10\s z10$) as a subword. Thus, in this restricted setting, a Sturmian word over $\{\s a,\s b\}$ could be characterised as a word $\s w$ over $\{0,1\}$ not containing a subword $\s z$ such that both $0\s z1$ and $1\s z0$ are subwords of $\s w$, provided we can capture the cyclicity of the string corresponding to $\s z$--we achieve this using automata.

An automaton provides an effective computational framework for recognising and generating sets of words over a given alphabet with prescribed restrictions. More formally, an \emph{automaton} over an alphabet $\s A$ is a machine that takes words over $\s A$ as input and decides whether the input word is ``accepted'' or not. A set $\s W$ of finite words over $\s A$ is said to be \emph{regular} if there exists an automaton $\s M$ such that the set of words accepted by $\s M$ is $\s W$. As a first interaction between the fields of representation theory of quivers and automata theory, Rees \cite{rees2008automata} constructed an automaton to show that the set of strings over a monomial algebra is regular. Continuing this thread of inquiry, Srivastava and the first author \cite{srivasta_automaton} automated the computation of the stable rank of a special biserial algebra.

In this paper, we build a decorated version of an automaton called \emph{a multi-entry inverse automaton (MIA)} (\Cref{defn: finite ptd automaton}) to furnish a word-theoretic characterisation of certain bricks over a string algebra. As the name suggests, an MIA has multiple initial states as opposed to a deterministic automaton having a single initial state. Therefore, this machine takes a word $\s w$ over $\s A$ and an initial state $v$ as input and then decides whether the ``pointed word'' $(v, \s w)$ is accepted or not. For a string algebra $\Lambda$ with underlying quiver $Q=(Q_0,Q_1)$, we give a prescription (\Cref{defn: MIA from string alg}) for building an MIA $\s M_\Lambda$ over the alphabet $Q_1\sqcup Q_1^{-1}$, and show (\Cref{prop: string are words}) that the set of finite (resp. infinite) strings over $\Lambda$ is in bijection with certain equivalence classes of finite (resp. infinite) pointed words accepted by $\s M_{\Lambda}$.

Traditionally, the algebraic property of ``brickness'' of an indecomposable module is captured by the existence of a common factor and image substring in the associated string  \cite{crawley1988tame,Krause1991MapsBT,crawley_boevey_infinite}. The most important contribution of this paper is the introduction of \emph{brick words} (\Cref{defn: brick word}) and \emph{weak brick words} (\Cref{defn: weak brick word}) in the framework of an MIA. Given an MIA $\s M$ over $\s A$, a certain surjective map $\varphi:\s A\to\s A'$ between alphabets called a \textit{local bijection} (\Cref{defn: local bijection}) allows us to define an MIA, denoted $M_\varphi$, over $\s A'$ so that there is a bijection (\Cref{thm: (weak) bricks are bijective}) between (weak) brick words over $\s M$ and $\s M_\varphi$. The restriction of the parity map $\delta:Q_1\sqcup Q_1^{-1}\to\{0,1\} $ with $\delta^{-1}(\{0\})=Q_1$ on $Q_1$ is a local bijection. The main results of the paper are the following.

\begin{thm}(\Cref{cor: string brick characterisation for string algebra automaton})
Given a string algebra $\Lambda$ and a finite/infinite aperiodic string $\f x$, the (direct sum) string module $M(\f x)$ is a brick if and only if every pointed word of the associated equivalence class of words over $\s M_{\Lambda\delta}$ is a brick word.
\end{thm}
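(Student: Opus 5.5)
The proof chains three translations. First, the classical criterion for endomorphisms of string modules: for a finite string (Crawley-Boevey) or an infinite aperiodic string (Crawley-Boevey's infinite-dimensional analogue, together with Krause's description of maps), the space $\mathrm{Hom}(M(\f x),M(\f x))$ has a basis indexed by pairs $(\f z\text{ factor substring},\ \f z\text{ image substring})$ of $\f x$, counted up to the identity pairing. Hence $M(\f x)$ is a brick if and only if there is no substring $\f z$ that occurs in $\f x$ both as a factor substring and as an image substring at a different position. Here $\f z$ is a factor substring if the arrows adjacent to it in $\f x$ point into $\f z$, or $\f z$ is an end. It is an image substring if the adjacent arrows point out of $\f z$, or $\f z$ is an end. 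For infinite strings I will take the aperiodicity hypothesis to be exactly what rules out the extra maps coming from shifts of the infinite string, so that the same finite-substring criterion holds.

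Second, I transport this to the MIA $\s M_\Lambda$ via \Cref{prop: string are words}. A string $\f x$ corresponds to an equivalence class of pointed words $(v,\s w)$ over $Q_1\sqcup Q_1^{-1}$. The equivalence is given by reading the string in either direction, together with the choice of base point for infinite strings. A substring $\f z$ with its two neighbouring letters corresponds to a factor of $\s w$ of the form $\alpha\s z\beta$ read from some state of $\s M_\Lambda$. The factor/image conditions then become conditions on the letters $\alpha,\beta$ being direct or inverse, which are exactly the conditions in \Cref{defn: brick word} (the forbidden configuration being a pattern like $0\s z1$ together with $1\s z0$ starting at the same state). The check is straightforward: for each of the cases "$\f z$ at an end" versus "$\f z$ interior", the string-theoretic condition matches the word-theoretic one. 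Because of the inversion and base point change built into the equivalence, I must show that being a brick word is either invariant along the class or, as the statement phrases it, required of every representative. Every representative is needed because a given orientation of $\s w$ only sees occurrences with a fixed reading direction. This yields: $M(\f x)$ is a brick if and only if every pointed word in the class is a brick word over $\s M_\Lambda$.

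Third, I pass from $\s M_\Lambda$ to $\s M_{\Lambda\delta}$. The parity map $\delta$ restricted as stated is a local bijection, so \Cref{thm: (weak) bricks are bijective} gives a bijection between brick words over $\s M_\Lambda$ and over $\s M_{\Lambda\delta}$. This bijection is compatible with the equivalence classes, since $\delta$ commutes with inversion (swapping $0,1$) and the states are unchanged. Composing the three equivalences gives the theorem.

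The main obstacle is the second step: matching the factor/image substring criterion precisely with the brick-word definition. The boundary cases need the most care. These are substrings touching an end of a finite string or a one-sided infinite end, the trivial string $1_v$, and overlapping occurrences. I would first handle $\f z$ nontrivial and interior, then treat the end cases using the convention that a missing neighbouring letter satisfies both conditions.
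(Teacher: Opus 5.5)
Your three-step architecture is the paper's: Crawley-Boevey's criterion, transport along $\widetilde{\s W}$ to $\s M_\Lambda$, then $\Phi$ for the local bijection $\delta$. However, your treatment of orientation contains a genuine error. Your Step 1 criterion drops the clause ``or an image substring of $\f x^{-1}$'' from \Cref{prop: string brick criteria}, and that clause is not redundant. Take the gentle algebra with arrows $\gamma:p\to q$, $\alpha:q\to r$, $\beta:r\to q$ and $\rho=\{\beta\alpha\}$, and the string $\f x=\gamma\alpha\beta\gamma^{-1}$. Here $\gamma$ is a factor substring of $\f x$ (it is followed by $\alpha$). It is also an image substring of $\f x^{-1}=\gamma\beta^{-1}\alpha^{-1}\gamma^{-1}$, so $M(\f x)$ has a nonzero nilpotent endomorphism sending the $\gamma$-segment at the start onto the $\gamma^{-1}$-segment at the end. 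Yet checking all factor and image substrings of $\f x$ shows that no proper substring is both in the same orientation. Your criterion would therefore declare $M(\f x)$ a brick.

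Your repair does not close this. The relation $\sim$ of \Cref{defn: sim} only moves the base point: condition (1) forces the same underlying word, and $\f x\neq\f x^{-1}$ in $\St$, so otherwise \Cref{prop: string are words} would fail. Even if $\s w^{-1}$ were in the class, testing $\s w$ and $\s w^{-1}$ separately would not help. The factor and image substrings of $\f x^{-1}$ are exactly the inverses of those of $\f x$, so a factor occurrence of $\f z$ paired with an image occurrence of $\f z^{-1}$ is never detected. In the paper this mixed case lives inside \Cref{defn: brick word}, via ``image subword of $\s w$ or $\s w^{-1}$''. ``Every word of the class'' is then harmless only because being a brick word is $\sim$-invariant. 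A smaller point: your descriptions of factor and image are swapped relative to \Cref{defn:factor image}; in $\beta^{-1}\f u\alpha$ the boundary arrows point away from $\f u$.

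Second, for infinite $\f x$ you reduce to finite substrings on the grounds that aperiodicity removes the extra maps. It does not. \Cref{prop: string brick criteria} ranges over possibly infinite $\f y$, and aperiodicity only excludes an infinite $\f y$ occurring twice in the same orientation. Suppose $\f x=\f y^{-1}\ell\,\f y$ is a $\mathbb Z$-string, with $\ell$ a loop and $\f y$ an aperiodic right $\mathbb N$-string; such strings exist in suitable string algebras, e.g.\ with $\ell^2\in\rho$. Then $\f x$ is aperiodic. Yet the left-infinite substring $\f y^{-1}$ is a factor substring of $\f x$ (it is followed by $\ell$) and an image substring of $\f x^{-1}=\f y^{-1}\ell^{-1}\f y$. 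So $M(\f x)$ fails to be a brick because of an infinite witness.

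This is exactly why \Cref{defn: brick word} allows infinite $\overline{\s w}$ while \Cref{defn: weak brick word} does not. Your reduction would at best characterise aperiodic weak brick words, and you would need a separate argument that an infinite witness always yields a finite one. The fix is to keep both features of \Cref{prop: string brick criteria}, the $\f x^{-1}$ clause and infinite $\f y$, and match them with the $\s w^{-1}$ clause and infinite $\overline{\s w}$ in \Cref{defn: brick word}. With that done, your three steps coincide with the paper's proof.
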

\Cref{thm: deaconu} could be obtained as a consequence of the above (see \Cref{cor: DMP}).
\begin{thm}(\Cref{cor: band brick characterisation for string algebra automaton})
Given a string algebra $\Lambda$, a band $\f b$, $n\in\mathbb N$, and $\Lambda\in\C K^{\times}$, the band module $B(\f b,n,\lambda)$ is a brick if and only if $n=1$ and every pointed word of the associated equivalence class of words over $\s M_{\Lambda\delta}$ is a weak brick word.
\end{thm}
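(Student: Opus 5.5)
The plan is to reduce the statement to the classical description of endomorphisms of band modules and then pass through the dictionary between strings and pointed words. First, I would dispose of $n\ge 2$. For $n\ge 2$ the band module $B(\f b,n,\lambda)$ has $B(\f b,1,\lambda)$ as a proper submodule and also as a quotient; this comes from the Jordan block structure of the regular uniserial $\C K[T,T^{-1}]$-module of length $n$. Composing the quotient map with the inclusion gives a nonzero nilpotent endomorphism, so the module is not a brick. It therefore remains to show that for $n=1$ the module $B(\f b,1,\lambda)$ is a brick if and only if every pointed word in the equivalence class attached to $\f b$ is a weak brick word over $\s M_{\Lambda\delta}$.

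For $n=1$ I would invoke the Crawley-Boevey/Krause description of a basis of $\mathrm{Hom}(B(\f b,1,\lambda),B(\f b,1,\lambda))$. Besides the identity, the basis elements are the graph maps indexed by pairs $(\f u,\f u')$, where $\f u$ is a factor substring and $\f u'$ an image substring of the bi-infinite periodic string $\INF{\f b}$, with $\f u'$ equal to $\f u$ or $\f u^{-1}$. The pairs coming from the trivial rotation $\f u=\f u'$ up to shift by the period are excluded; these only produce the scalars. Hence $B(\f b,1,\lambda)$ is a brick exactly when $\INF{\f b}$ has no such nontrivial (factor, image) pair.

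Next I would translate this via \Cref{prop: string are words}. The band $\f b$ corresponds to an equivalence class of bi-infinite periodic pointed words over $\s M_\Lambda$, and a substring corresponds to a pointed subword. The condition ``factor substring'' says that the arrows just outside $\f u$ point inward. Under $\delta$ this is the pattern $0\,\s z\,1$ in one orientation of reading, and ``image substring'' is the pattern $1\,\s z\,0$. This mirrors the $\Lambda_3$ discussion in the introduction, where the occurrences $0\s z1$ and $1\s z0$ have to be read at the correct states of the automaton.

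\Cref{defn: weak brick word} should encode exactly the absence of both patterns at occurrences that do not differ by a period shift. Since $\delta$ restricted to $Q_1$ is a local bijection, \Cref{thm: (weak) bricks are bijective} lets me move freely between weak brick words over $\s M_\Lambda$ and over $\s M_{\Lambda\delta}$. Taking all pointed words of the equivalence class (all rotations and both orientations) accounts for substrings occurring at every position and in inverse direction.

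The main obstacle is this last matching step. The "weak" condition must allow precisely the trivial coincidences coming from periodicity, namely the pair $\f u=\f u'$ shifted by a multiple of the period, which give the scalar endomorphisms. Any other coincidence, including overlapping occurrences and inverse occurrences, must be forbidden. I would check this by comparing the defining clause of weak brick words, which I expect excludes occurrences differing by a multiple of the word period, with Krause's condition that graph maps of a band to itself are nontrivial unless they induce the identity on the period. I would also verify separately the degenerate case where $\f u$ is empty or $\f u$ covers a full period.
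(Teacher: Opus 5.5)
Your skeleton is the paper's: cite Krause's criterion (\Cref{prop: band brick criteria}), transport it along $\widetilde{\s W}$ using that factor and image substrings correspond to factor and image subwords, and then pass to $\s M_{\Lambda\delta}$ by \Cref{thm: (weak) bricks are bijective}. Your nilpotent-endomorphism argument for $n\ge 2$ and your re-derivation of the criterion from the graph-map basis are correct but unnecessary, since \Cref{prop: band brick criteria} already contains the condition $l=1$.

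The gap is in the step you call the main obstacle and leave unresolved. You are aiming it at a condition that is not in the definition. \Cref{defn: weak brick word} has no clause about occurrences differing by a multiple of the period. It only requires the witness $\overline{\s w}$ to be \emph{finite}, and this matches exactly the ``finite string $\f y$'' in \Cref{prop: band brick criteria}. The trivial coincidence behind the scalars is the whole infinite word, which is trivially a factor and an image subword of itself; finiteness is precisely what excludes it. Nothing else needs excluding. On the bi-infinite string $\INF{\f b}$, a finite factor occurrence has the form $\beta^{-1}\f y\alpha$ and a finite image occurrence has the form $\gamma\f y\eta^{-1}$, with $\alpha,\beta,\gamma,\eta\in Q_1$. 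These can never be the same occurrence, so your ``trivial pairs $\f u=\f u'$ up to a period shift'' do not exist among finite pairs, and no period bookkeeping arises. Likewise, ranging over the $\sim$-class does not capture inverse occurrences. Condition (1) of \Cref{defn: sim} fixes the underlying word and its orientation, so the class only moves the basepoint. Inverse occurrences are handled instead by the clause ``image subword of $\s w$ or $\s w^{-1}$''. With the definition read correctly, your last step becomes the paper's one-line argument. $\widetilde{\s W}$ sends finite factor (resp. image) substrings of $\INF{\f b}$ or $\INF{(\f b^{-1})}$ to finite factor (resp. image) subwords of $\s w$ or $\s w^{-1}$, and conversely. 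Since weak brickness is $\sim$-invariant, ``every'' and ``some'' word of the class are interchangeable. One minor slip: with $\delta^{-1}(\{0\})=Q_1$, a factor occurrence reads $1\s z0$ and an image occurrence reads $0\s z1$, the reverse of what you wrote. This is harmless, because the criterion is unchanged if you swap the roles of factor and image and pass to $\f y^{-1}$.
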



We outline a construction of a presentation of the string algebra $\Lambda$ from the MIA ${\s M_{\Lambda\delta}}$ in \S~\ref{sec: recover string alg}. 



\section{Preliminaries on string algebras}
In this section, we set up notation and outline some background for string algebras relevant to this paper. The presentation aligns with that of \cite{sengupta2024characterisation} with necessary minor variations. We begin by defining some combinatorial objects known as (finite and infinite) strings and bands consisting of words built using letters arising out of arrows in the quiver. These combinatorial objects are the key to constructing indecomposable modules known as string modules and band modules. These two classes of indecomposable modules exhaust all the finite-dimensional indecomposable modules over a string algebra; a proof, essentially due to \cite{gel1968indecomposable}, can be found in \cite{butler1987auslander}.

\begin{defn}\label{defn:string alg}
A \emph{string algebra} is a monomial algebra $\C KQ/\langle\rho\rangle$, where $Q=(Q_0,Q_1,s,t)$ is a finite quiver satisfying the following properties.
\begin{enumerate}[label=(\Roman*)]
    \item\label{indegoutdeg} For every $v\in Q_0$, there are at most two arrows $\alpha,\beta\in Q_1$ such that $s(\alpha)=s(\beta)=v$, and at most two arrows $\alpha',\beta'\in Q_1$ such that $t(\alpha')=t(\beta')=v$.
    \item\label{at most one arrow} For every $\alpha\in Q_1$, there is at most one arrow $\gamma\in Q_1$ such that $\alpha\gamma\notin\rho$ and at most one arrow $\gamma'$ such that $\gamma'\alpha\notin\rho$.
    \item\label{admissible} There exists $M\geq 0$ such that $|p|\leq M$ for every path $p$ having no subpath in $\rho$.
\end{enumerate}
If all the relations in $\rho$ are of length 2 and Condition \ref{at most one arrow} is replaced by the following conditions, then $\C KQ/\langle\rho\rangle$ is said to be a \emph{gentle algebra}.
\begin{enumerate}[label=(II\alph*)]
    \item For every $\alpha\in Q_1$, there is at most one arrow $\beta\in Q_1$ such that $\alpha\beta\notin\rho$ and at most one arrow $\beta'\in Q_1$ such that $\alpha\beta'\in\rho$.
    \item For every $\alpha\in Q_1$, there is at most one arrow $\gamma\in Q_1$ such that $\gamma\alpha\notin\rho$ and at most one arrow $\gamma'\in Q_1$ such that $\gamma'\alpha\in\rho$.
\end{enumerate}
\end{defn}
If $\Lambda=\C KQ/\langle\rho\rangle$ is a string algebra then we also say that $(Q,\rho)$ is a presentation of a string algebra $\Lambda$ and we refer $Q$ as the underlying quiver of $\Lambda$.

For each string algebra, it is useful to choose and fix maps $\varsigma,\varepsilon:Q_1\to\{-1,1\}$ with the following properties.
\begin{enumerate}[label=(\alph*)]
    \item If $\alpha,\beta\in Q_1$ such that $\alpha\neq\beta$ with $s(\alpha)=s(\beta)$, then $\varsigma(\alpha)=-\varsigma(\beta)$.
    
    \item If $\alpha,\beta\in Q_1$ such that $\alpha\neq\beta$ with $t(\alpha)=t(\beta)$, then $\varepsilon(\alpha)=-\varepsilon(\beta)$.
    
    \item If $\alpha,\beta\in Q_1$ such that $t(\alpha)=s(\beta)$ and $\alpha\beta\notin\rho$, then $\varepsilon(\alpha)=-\varsigma(\beta)$.
\end{enumerate}
For each $\alpha\in Q_1$, let $\alpha^{-1}$ be the formal inverse of $\alpha$ such that ${(\alpha^{-1})}^{-1}=\alpha$. We define $s(\alpha^{-1}):=t(\alpha)$ and $t(\alpha^{-1}):=s(\alpha)$. Define $Q_1^{-1}:=\{\alpha^{-1}:\alpha\in Q_1\}$. The elements of $Q_1\sqcup Q_1^{-1}$ will be called \emph{syllables}.

\begin{defn}\label{defn: string}
A finite sequence of syllables $\alpha_1\cdots\alpha_k$ is called a \emph{(finite) string} of length $k$ if the following hold.
\begin{itemize}
    \item For each $i\in\{1,\cdots,k-1\}$, $t(\alpha_i)=s(\alpha_{i+1})$.

    \item For each $i\in\{1,\cdots,k-1\}$, $\alpha_i\neq\alpha_{i+1}^{-1}$.

    \item For all $1\leq i\leq i+j\leq k$, $\alpha_i\cdots\alpha_{i+j}\notin\rho$ and $\alpha_{i+j}^{-1}\cdots\alpha_i^{-1}\notin\rho$.
\end{itemize}
In addition, for each $v\in Q_0$, there are zero-length strings $1_{(v,1)}$ and $1_{(v,-1)}$.
\end{defn}
For a string $\f x:=\alpha_1\cdots\alpha_k$, we extend the notions of inverse, starting and ending vertex in the obvious ways: $\f x^{-1}:=\alpha_k^{-1}\cdots\alpha_1^{-1}$, $s(\f x^{-1}):=t(\f x)$ and $t(\f x^{-1}):=s(\f x)$. We refer to $\alpha_1$ as the \emph{first syllable} of $\f x$ and $\alpha_k$ as the \emph{last syllable} of $\f x$. We also have $1_{(v,i)}^{-1}=1_{(v,-i)}$ and $s(1_{(v,i)})=t(1_{(v,i)})=v$ for any $i\in\{-1,1\}$. We denote by $\f{St}^{\R{fin}}(\Lambda)$ the set of all (finite) strings for a string algebra $\Lambda$ and by $\f{St}^{\R{fin}}_{>0}(\Lambda)$ the set of all positive-length (finite) strings. The $\varsigma$ and $\varepsilon$ maps are extended to $\f{St}(\Lambda)$ in the following way. For $\alpha\in Q_1$, $\varsigma(\alpha^{-1}):=\varepsilon(\alpha)$ and $\varepsilon(\alpha^{-1}):=\varsigma(\alpha)$. If $\f u=\alpha_1\cdots\alpha_k\in\f{St}_{>0}(\Lambda)$ then $\varsigma(\f u):=\varsigma(\alpha_1)$ and $\varepsilon(\f u):=\varepsilon(\alpha_k)$. In addition, we have $\varsigma(1_{(v,i)}):=-i=:-\varepsilon(1_{(v,i)})$ for each $i\in\{-1,1\}$.

For strings $\f x$ and $\f y$, if $\f x\f y$ is defined then $\varsigma(\f y)=-\varepsilon(\f x)$. For a vertex $v\in Q_0$ and $i\in\{-1,1\}$, we say that the concatenation $\f x1_{(v,i)}$ is defined and is equal to $\f x$ if $t(\f x)=v$ and $\varepsilon(\f x)=i$. Similarly, we say that the concatenation $1_{(v,i)}\f x$ is defined and is equal to $\f x$ if $s(\f x)=v$ and $\varsigma(\f x)=-i$.

Similar to finite strings, \emph{infinite strings} can be defined using an infinite sequence of syllables.

\begin{defn}
A sequence $\cdots\alpha_2\alpha_1$ of syllables is called a \emph{left $\mathbb N$-string} if $\alpha_k\cdots\alpha_1$ is a string for every $k\in\mathbb N$. Denote the set of all left $\mathbb N$-strings by $\overleftarrow{\f{St}}(\Lambda)$. A sequence $\alpha_1\alpha_2\cdots$ of syllables is called a \emph{right $\mathbb N$-string} if $\alpha_1\alpha_2\cdots\alpha_k$ is a string for every $k\in\mathbb N$. Denote the set of all left $\mathbb N$-strings by $\overrightarrow{\f{St}}(\Lambda)$. A sequence $\cdots\alpha_1\alpha_0\alpha_{-1}\cdots$ of syllables is called a \emph{$\mathbb Z$-string} if $\alpha_i\cdots\alpha_j$ is a string for every pair of integers $i\leq j$. Denote the set of all left $\mathbb N$-strings by $\overleftrightarrow{\f{St}}(\Lambda)$. Denote the set of all strings by $\St$.
\end{defn}

\begin{defn}
For (possibly infinite) strings $\f x$ and $\f y$, we say that $\f x$ is a \emph{substring} of $\f y$, denoted $\f x\sqsubseteq\f y$, if $\f y=\f u\f x\f v$ for some (possibly infinite) strings $\f u$ and $\f v$. We say that $\f x$ is a \emph{left substring} of $\f y$, denoted $\f x\sqsubseteq_l\f y$, if $\f y=\f x\f u$ for a (possibly infinite) string $\f u$. Dually say that $\f x$ is a \emph{right substring} of $\f y$, denoted $\f x \sqsubseteq_r \f y$, if $\f y=\f u\f x$ for a (possibly infinite) string $\f u$. We say that $\f x$ is a proper (resp. left, right) substring of $\f y$ if $\f x\sqsubseteq \f y$ (resp. $\f x\sqsubseteq_l \f y$, $\f x\sqsubseteq_r \f y$) but $\f x\neq\f y$.
\end{defn}

\begin{defn}\label{defn:factor image}
Given a (possibly infinite) string $\f x$, a (possibly infinite) substring $\f u$ of $\f x$ is said to be a \emph{factor substring} if one of the following holds:
\begin{itemize}
    \item $\f u=\f x$;
    \item $\f u\alpha\sqsubseteq_l\f x$ for some $\alpha\in Q_1$;
    \item  $\beta^{-1}\f u\sqsubseteq_r\f x$ for some $\beta\in Q_1$;
    \item $\beta^{-1}\f u\alpha\sqsubseteq\f x$ for some $\alpha,\beta\in Q_1$.
\end{itemize}
Dually, a (possibly infinite) substring $\f u$ of a (possibly infinite) string $\f x$ is said to be an \emph{image substring} if one of the following holds:
\begin{itemize}
    \item $\f u=\f x$;
    \item $\f u\alpha^{-1}\sqsubseteq_l\f x$ for some $\alpha\in Q_1$;
    \item $\beta\f u\sqsubseteq_r\f x$ for some $\beta\in Q_1$;
    \item $\beta\f u\alpha^{-1}\sqsubseteq\f x$ for some $\alpha,\beta\in Q_1$.
\end{itemize}
\end{defn}

\begin{defn}
A right $\mathbb N$-string $\f x=\alpha_1\alpha_2\alpha_3\cdots$ is called \emph{periodic} if there exists $k\in\mathbb N$ such that $\alpha_n=\alpha_{n+k}$ for every $n\in\mathbb N$. A right $\mathbb N$-string $\f x=\alpha_1\alpha_2\alpha_3\cdots$ over $\s A$ is called \emph{almost periodic} if $\f x$ has a periodic right substring. A right-infinite word $\f x=\alpha_1\alpha_2\alpha_3\cdots$ over $\s A$ is called \emph{aperiodic} if $\f x$ is not almost periodic.

A left $\mathbb N$-string $\f x=\cdots\alpha_3\alpha_2\alpha_1$ is called \emph{periodic} (resp. \emph{almost periodic}, \emph{aperiodic}) if $\f x^{-1}$ is periodic (resp. almost periodic, aperiodic).

A $\mathbb Z$-string $\f x=\cdots\alpha_{-1}\alpha_0\alpha_1\alpha_2\cdots$ is called \emph{periodic} if there exists $k\in\mathbb Z$ such that $\alpha_n=\alpha_{n+k}$ for every $n\in\mathbb Z$. The string $\f x$ is called \emph{right almost periodic} (resp. \emph{left almost periodic}) if the string $\alpha_1\alpha_2\alpha_3\cdots$ (resp. $\cdots\alpha_2\alpha_1\alpha_0$) is almost periodic. The word $\f x$ is called \emph{aperiodic} if it is neither left almost periodic nor right almost periodic.
\end{defn}

\begin{defn}
A string $\f b\in\s{St}^{\R{fin}}_{>0}(\Lambda)$ is called a \emph{band} if all of the following hold:
\begin{itemize}
    \item $\f b$ is cyclic i.e. $s(\f b)=t(\f b)$;
    \item $\f b$ is primitive i.e. $\f b\neq\f u^m$ for any $m\geq1$ and any string $\f u$;
    \item for every $m\geq1$, $\f b^m$ is a string;
    \item the first syllable of $\f b$ is inverse and the last syllable of $\f b$ is direct.
\end{itemize}
Denote by $\f{Ba}(\Lambda)$ the set of all bands for a string algebra $\Lambda$.
\end{defn}

For a finite string $\f x$, one can associate an indecomposable module known as a \emph{string module} $M(\f x)$ (see \cite[\S~1.9]{SchHam98} or \cite[\S~2.3.1]{LakThe16}). For an infinite string $\f x$, an infinite-dimensional direct-sum string module $M(\f x)$ can be defined similarly (see \cite[\S~2.3.3]{LakThe16}). We have $M(\f x)\cong M(\f y)$ if and only if $\f x=\f y$ or $\f x=\f y^{-1}$. For a band $\f b$, $l\geq1$ and $\lambda\in\C K^\times$, one can associate an indecomposable module known as a \emph{band module} $B(\f b,l,\lambda)$ (see \cite[\S~1.11]{SchHam98} or \cite[\S~2.3.2]{LakThe16}). We have $B(\f b,l,\lambda)\cong B(\f b',l',\lambda')$ if and only if $l=l'$, $\lambda=\lambda'$, and $\f b'$ is a cyclic permutation of either $\f b$ or $\f b^{-1}$. The following results give necessary and sufficient criteria for string modules and band modules to be bricks.

\begin{thm}\label{prop: string brick criteria}
\cite{crawley1989maps,crawley_boevey_infinite}
Let $\f x$ be a (possibly infinite) string. Then the string module $M(\f x)$ is a brick if and only if $\f x$ is aperiodic and there is no string $\f y$ that is a factor subtring of $\f x$ as well as an image substring of either $\f x$ or $\f x^{-1}$.
\end{thm}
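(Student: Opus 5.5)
The plan is to describe $\operatorname{End}(M(\f x))$ through the standard basis of graph maps and read off both directions from it. For a finite string, Crawley-Boevey's theorem \cite{crawley1989maps} says that $\operatorname{Hom}(M(\f x),M(\f y))$ has a $\C K$-basis of \emph{graph maps}. Each graph map is given by a pair $(\f u,\f u')$, where $\f u$ is a factor substring of $\f x$, $\f u'$ is an image substring of $\f y$, and $\f u'\in\{\f u,\f u^{-1}\}$. The map sends the basis vectors of $M(\f x)$ along $\f u$ identically onto the corresponding basis vectors of $M(\f y)$ along $\f u'$, and kills the rest. For infinite strings and direct-sum string modules, I would invoke the analogous description from \cite{crawley_boevey_infinite}. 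There, $\operatorname{End}(M(\f x))$ contains the finite linear combinations of graph maps and is topologically spanned by them. I would also use the known fact that for an almost periodic infinite string, shifting along a period produces an additional non-scalar endomorphism.

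\textbf{Sufficiency.} Take $\f y=\f x$. The identity is the graph map for the pair $(\f x,\f x)$, which is allowed because $\f x$ is both a factor and an image substring of itself. Suppose $\f x$ is aperiodic and no common factor/image substring exists other than this trivial one. Then the only graph map is the identity, so the endomorphism ring is spanned by it and equals $\C K$. For infinite $\f x$, I would argue that every endomorphism is determined by its components on the basis vectors. Each component must be a combination of graph maps, since its restriction to finite pieces forces this. Aperiodicity rules out the shift-type endomorphisms that would otherwise contribute infinite sums, so again the endomorphism is scalar.

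\textbf{Necessity.} Suppose there is a proper substring $\f y$ that is a factor substring of $\f x$ and an image substring of $\f x$ or of $\f x^{-1}$. The corresponding graph map $M(\f x)\to M(\f x)$ is nonzero. It is not an isomorphism because its image is spanned only by the basis vectors along the image copy of $\f y$, which is a proper subset since $\f y\neq \f x$. A nonzero non-invertible endomorphism means $\operatorname{End}(M(\f x))\neq\C K$. If instead $\f x$ is almost periodic, the shift endomorphism is again non-scalar. I would carefully interpret "no string $\f y$" as "no such $\f y$ apart from $\f x$ itself used identically", since $\f x$ is trivially a factor and image substring of itself. One subtlety to check is the case $\f y=\f x^{-1}$ with $\f x=\f x^{-1}$ as strings, which is impossible because a string is never equal to its own inverse, thanks to the rule $\alpha_i\neq\alpha_{i+1}^{-1}$.

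\textbf{Main obstacle.} The hard part is the infinite case: showing that every endomorphism of the infinite-dimensional module is a (possibly infinite) combination of graph maps, and that aperiodicity excludes the extra ones. Here I would rely directly on the cited results of Crawley-Boevey rather than reprove them. The statement is really a repackaging of \cite{crawley1989maps,crawley_boevey_infinite}, so the proof consists of matching hypotheses.
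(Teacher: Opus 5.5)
Your proposal takes the same route as the paper, which gives no argument of its own and simply attributes the criterion to Crawley-Boevey's graph-map description of homomorphisms between (finite and infinite) string modules, so the proof does reduce to matching hypotheses as you say. You also read the condition correctly as excluding every common factor/image substring except the trivial occurrence of $\mathfrak x$ in itself; this is the intended meaning, since read literally the choice $\mathfrak y=\mathfrak x$ would always violate it.
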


\begin{thm}\label{prop: band brick criteria}
\cite{Krause1991MapsBT} Let $\f b$ be a band, $l\in\mathbb N$ and $\lambda\in\C K^\times$. Then the band module $B(\f b,l,\lambda)$ is a brick if and only if $l=1$ and there is no finite string $\f y$ that is a factor substring of $\INF{\f b}$ as well as an image substring of either $\INF{\f b}$ or $\INF{(\f b^{-1})}$.
\end{thm}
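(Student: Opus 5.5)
The plan is to derive the criterion from Krause's description of morphisms between band modules, in the same way that \Cref{prop: string brick criteria} follows from the Crawley-Boevey description for string modules. I will use the following form of Krause's theorem. Take bands $\f b,\f b'$, integers $l,l'\geq1$ and scalars $\lambda,\lambda'\in\C K^\times$. Then $\mathrm{Hom}(B(\f b,l,\lambda),B(\f b',l',\lambda'))$ has a basis in two parts:
\begin{itemize}
    \item \emph{graph maps}, indexed by pairs (finite factor substring $\f y$ of $\INF{\f b}$, matching image substring $\f y$ of $\INF{\f b'}$ or of $\INF{(\f b'^{-1})}$), with pairs identified up to the shift by a period of $\f b$ and of $\f b'$;
    \item when $\f b'$ is a cyclic permutation of $\f b$ or $\f b^{-1}$ and $\lambda=\lambda'$, additional maps coming from $\mathrm{Hom}_{\C K[T]}(\C K[T]/(T-\lambda)^l,\C K[T]/(T-\lambda)^{l'})$. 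These correspond to the ``infinite'' pair $\f y=\INF{\f b}$.
\end{itemize}
I specialise this to $\f b'=\f b$, $l'=l$ and $\lambda'=\lambda$. This gives
\[\mathrm{End}(B(\f b,l,\lambda))\cong \C K[T]/(T-\lambda)^l\ \oplus\ \textstyle\bigoplus_{\f y}\C K f_{\f y}\]
as vector spaces, where $\f y$ runs over the finite factor/image pairs described in the statement, taken up to shift.

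\emph{Sufficiency.} Suppose $l=1$ and there is no finite $\f y$ that is both a factor substring of $\INF{\f b}$ and an image substring of $\INF{\f b}$ or $\INF{(\f b^{-1})}$. Then the sum over $\f y$ is empty, and $\mathrm{End}(B(\f b,1,\lambda))\cong\C K[T]/(T-\lambda)\cong\C K$. Hence $B(\f b,1,\lambda)$ is a brick.

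\emph{Necessity.} There are two cases.
\begin{itemize}
    \item If $l\geq2$, multiplication by $T-\lambda$ on the Jordan-block part is a nonzero nilpotent endomorphism, so the module is not a brick.
    \item If $l=1$ and such a finite $\f y$ exists, the graph map $f_{\f y}$ is a nonzero endomorphism, and I must show it is not a scalar.
\end{itemize}
In the second case the image of $f_{\f y}$ is spanned by the basis vectors lying on a single copy of the finite string $\f y$. This is a proper subspace, because $\f y$ is finite while $\f b$ has positive length and its copy in $B(\f b,1,\lambda)$ is the whole cycle. So $f_{\f y}$ is not invertible, hence not a nonzero scalar, and $\mathrm{End}\neq\C K$. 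Equivalently, one could note that $\mathrm{End}$ is local because the module is indecomposable, and that $f_{\f y}$ lies in the radical.

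The main obstacle is making the identification of the Krause basis precise in the present conventions: factor versus image orientation via direct/inverse syllables, and the inclusion of the reversed band $\INF{(\f b^{-1})}$. The delicate case is a pair $\f y$ with $\f y\sqsubseteq\INF{\f b}$ that wraps around more than one period. I would first check that such $\f y$ still gives a well-defined graph map that is linearly independent of the Jordan-block maps. The independence should follow because graph maps factor through the string module $M(\f y)$, which is finite-dimensional with its top and socle sitting at the ends of $\f y$, whereas the Jordan-block maps do not factor this way. Once that is settled, the argument above is a direct count of basis elements.
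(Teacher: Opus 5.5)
Your proposal is correct and takes essentially the paper's route. The paper gives no argument of its own and just cites Krause, and the criterion is exactly the specialisation of Krause's basis for $\mathrm{Hom}(B(\f b,l,\lambda),B(\f b',l',\lambda'))$ to $\f b'=\f b$, $l'=l$, $\lambda'=\lambda$, with the nilpotent Jordan-block endomorphism handling $l\geq2$. One loose point is your ``not a scalar'' step. It is unnecessary, since $f_{\f y}$ is a basis vector linearly independent of the identity, and it should be justified by $\dim M(\f y)<\dim B(\f b,1,\lambda)$ rather than by finiteness of $\f y$. That inequality does hold: primitivity of $\f b$, together with the fact that $\INF{\f b}$ is never a shift of $\INF{(\f b^{-1})}$, forces $|\f y|\leq|\f b|-2$, so the wrap-around case you flag never arises.
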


\section{Preliminaries on word combinatorics}
In this section, we mention some preliminaries and set up some notations for the combinatorics on words over an alphabet. A reader expecting an in-depth exploration of word combinatorics can refer to \cite{lothaire1997combinatorics}.

Let $\s A$ be an alphabet. Each element of $\s A$ is called a \emph{letter}.

\begin{defn}
A \emph{finite word} $\s w$ over $\s A$ is a finite sequence $\s b_1\s b_2\cdots\s b_n$ of letters over $\s A$. The finite words over $\s A$ are equipped with a binary operation called \emph{concatenation} defined by $(\s b_1\cdots\s b_n)(\s b'_1\cdots\s b'_m):=\s b_1\cdots\s b_n\s b'_1\cdots\s b'_m$. The length of a finite word is defined as $|\s b_1\cdots\s b_n|:=n$. We will also have a zero-length word denoted by $\epsilon$ which acts as an identity with respect to concatenation. We denote the set of all finite words over $\s A$ by $\s A^*$.
\end{defn}

\begin{defn}
A \emph{right-infinite word} over $\s A$ is an infinite sequence of letters $\s b_1\s b_2\cdots$. A \emph{left-infinite word} over $\s A$ is an infinite sequence of letters $\cdots\s b_2\s b_1$. The concatenation is extended to finite and these infinite words in the obvious way. A \emph{bi-infinite word} over $\s A$ is an infinite sequence of letters $\cdots\s b_{-1}\s b_0\s b_1\s b_2\cdots$. We denote the set of right-infinite words by $\overrightarrow{\s A}^*$, left-infinite words by $\overleftarrow{\s A}^*$, and bi-infinite words by $\overleftrightarrow{\s A}^*$. We set $\overline{\s A}^*:=\s A^*\sqcup\overrightarrow{\s A}^*\sqcup\overleftarrow{\s A}^*\sqcup\overleftrightarrow{\s A}^*$.
\end{defn}

\begin{defn}
Given words $\s w,\s w'\in\overline{\s A}^*$, we say that $\s w'$ is a subword of $\s w$, denoted by $\s w'\sqsubseteq\s w$ if $\s w=\s w''\s w'\s w'''$ for some word $\s w'',\s w'''\in\overline{\s A}^*$. We say that $\s w'$ is a left subword of $\s w$, denoted by $\s w'\sqsubseteq_l\s w$ if $\s w=\s w'\s w''$ for some word $\s w''\in\overline{\s A}^*$. We say that $\s w'$ is a right subword of $\s w$, denoted by $\s w'\sqsubseteq_r\s w$ if $\s w=\s w''\s w'$ for some word $\s w''\in\overline{\s A}^*$. We say that $\s w'$ is a proper (resp. left, right) subword of $\s w$ if $\s w'\sqsubseteq\s w$ (resp. $\s w'\sqsubseteq_l\s w$, $\s w'\sqsubseteq_r\s w$) but $\s w\neq\s w'$.
\end{defn}

Let $\s b^{-1}$ denote the formal inverse of a letter $\s b\in\s A$. Set $\s A^{-1}:=\{\s b^{-1}\mid\s b\in\s A\}$, $\Apm:=\s A\sqcup\s A^{-1}$, and $\s S(\s A):=\overline{\s A^\pm}^*$.

For a finite word or right-infinite word $\s w=\s b_1\s b_2\s b_3\cdots$, define $\s w^{-1}:=\cdots\s b_3^{-1}\s b_2^{-1}\s b_1^{-1}$. For a left-infinite word $\s w$, $\s w^{-1}$ is the unique left-infinite word such that ${(\s w^{-1})}^{-1}=\s w$. For a bi-infinite word $\s w=\s w^{\R{l}}\s w^{\R{r}}$, $\s w^{-1}:={(\s w^{\R{r}})}^{-1}{(\s w^{\R{l}})}^{-1}$.

\begin{defn}
A right-infinite word $\s w=\s b_1\s b_2\s b_3\cdots$ over $\s A$ is called \emph{periodic} if there exists $k\in\mathbb N$ such that $\s b_n=\s b_{n+k}$ for every $n\in\mathbb N$. A right-infinite word $\s w=\s b_1\s b_2\s b_3\cdots$ over $\s A$ is called \emph{almost periodic} if $\s w$ has a periodic right subword. A right-infinite word $\s w=\s b_1\s b_2\s b_3\cdots$ over $\s A$ is called \emph{aperiodic} if $\s w$ is not almost periodic.

A left-infinite word $\s w=\cdots\s b_3\s b_2\s b_1$ is called \emph{periodic} (resp. \emph{almost periodic}, \emph{aperiodic}) if $\s w^{-1}$ is periodic (resp. almost periodic, aperiodic).

A bi-infinite word $\s w=\cdots\s b_{-1}\s b_0\s b_1\s b_2\cdots$ is called \emph{periodic} if there exists $k\in\mathbb Z$ such that $\s b_n=\s b_{n+k}$ for every $n\in\mathbb Z$. The word $\s w$ is called \emph{right almost periodic} (resp. \emph{left almost periodic}) if the word $\s b_1\s b_2\s b_3\cdots$ (resp. $\cdots\s b_2\s b_1\s b_0$) is almost periodic. The word $\s w$ is called \emph{aperiodic} if it is neither left almost periodic nor right almost periodic.
\end{defn}

\section{Multi-entry inverse automaton}
In this section, we develop a decorated version of a finite automaton that is useful to construct strings over a string algebra.

\begin{defn}\label{defn: finite ptd automaton}
A \emph{multi-entry inverse automaton} (\emph{MIA}, for short) is a tuple $\s M:=(\s M_0,\s I\subseteq \s M_0, \s e:\s M_0\to \s I, (-)^{-1}:\s I\to\s I,\s A,\s t: \s M_0\times\Apm\rightharpoonup\s M_0)$, where $\s M_0$ is a finite set of \emph{states}, $\s I$ is a subset of $\s M_0$ containing \emph{initial states}, $\s A$ is a finite alphabet, and $\s t$ is a partial function governing state transition satisfying the following conditions:
\begin{enumerate}
    \item $(-)^{-1}$ is an involution without fixed points;
    \item $\s e(v)=v$ for all $v\in\s I$;
    \item if $\s t(v,\s b)$ is defined for $v\in\s M_0$ then $\s t(\s e(v),\s b)$ is defined and $\s e(\s t(v,\s b))=\s e(\s t(\s e(v),\s b))$.
\end{enumerate}
\end{defn}

Note that $\s M$ is a deterministic automaton with multiple initial states, and where all states are accepting states. An automaton having multiple initial states can be transformed into a traditional automaton by introducing a new unique initial state $\iota$ and defining zero-length transitions from $\iota$ to the old initial states.

Extend the transition function $\s t$ to $\widetilde{\s t}:\s M_0\times (\Apm)^*\rightharpoonup\s M_0$ as follows:
\begin{equation*}
\widetilde{\s t}(v,\s w):=
\begin{cases}
    v&\text{if }\s w=\epsilon,\\
    \s t(\widetilde{\s t}(v,\s w'),\s b)&\text{if }\s w=\s w'\s b\text{ for some }(\s w',\s b)\in(\Apm)^*\times\Apm.
\end{cases}
\end{equation*}

Since an MIA has multiple initial states unlike a traditional automaton, a word accepted by the automation has to include the data of the initial state. Keeping this in mind, we define below the formal notion of a \emph{word over an MIA}.

\begin{defn}
Let $\s M:=(\s M_0,\s I,\s e,(-)^{-1},\s A,\s t)$ be an MIA.
\begin{enumerate}
    \item A \emph{finite right word} over $\s M$ is a pair $(v,\s w)$, where $\s w\in(\Apm)^*$ and $v\in\s I$ such that $\widetilde{\s t}(v,\s w)$ is defined. Denote the set of all finite right words over $\s M$ by $\s{\overrightarrow{\s W}_{\R{fin}}(M)}$.

        \item An \emph{$\mathbb N$-word} over $\s M$ is a pair $(v,\s w)$, where $\s w$ is an infinite word over $\s A$ indexed by $\mathbb N$ and $v\in \s I$ such that $\widetilde{t}(v,\s w')$ is defined for each $\s w'\sqsubset_l\s w$. Denote the set of all $\mathbb N$-words over $\s M$ by $\s{\overrightarrow{\s W}_{\R{inf}}(M)}$. Set $\s{\overrightarrow{\s W}(M)}:=\s{\overrightarrow{\s W}_{\R{fin}}(M)}\cup \s{\overrightarrow{\s W}_{\R{inf}}(M)}$ and call each of its elements a \emph{right word}.

    \item A finite \emph{left word} over $\s M$ is a pair $(\s w,v)$ such that $(v^{-1},\s w^{-1})$ is a finite right word over $\s M$. Denote the set of all finite left words over $\s M$ by $\s{\overleftarrow{\s W}_{\R{fin}}(M)}$.
    
    \item An \emph{$\mathbb N^{\R{op}}$-word} over $\s M$ is a pair $(\s w,v)$ such that $(v^{-1},\s w^{-1})$ is an $\mathbb N$-word. Denote the set of all $\mathbb N^{\R{op}}$-words over $\s M$ by $\s{\overleftarrow{\s W}_{\R{inf}}(M)}$. Set $\s{\overleftarrow{\s W}(M)}:=\s{\overleftarrow{\s W}_{\R{fin}}(M)}\cup \s{\overleftarrow{\s W}_{\R{inf}}(M)}$ and call each of its elements a \emph{left word}.

    \item A \emph{word} over $\s M$ is a tuple $\s w:=(\s w^{\R{l}},v,\s w^{\R{r}})$, where $(\s w^{\R{l}},v)\in \leftword$, $(v,\s w^{\R{r}})\in\rightword$, $v\in\s I$, and $(\s w^{\R{l}}\s w',\s e(\widetilde{\s t}(v,\s w')))\in\leftword$ for each finite $\s w'\sqsubseteq_l\s w^{\R{r}}$. Denote the set of all words over $\s M$ by $\s W(\s M)$.
\end{enumerate}
\end{defn}

A word over an MIA can be intuitively visualised as a word over the alphabet with the additional data of an initial state of the MIA at a particular ``gap'' or ``basepoint'' of the word. With the help of the transition function $\s t$ and the function $\s e$, words can also be visualised as ``gap-labelled words''---words with additional data of an initial state at each gap. The next definition allows us to change the ``basepoint'' in a word over an automaton without changing the gap-labelled word.
\begin{defn}\label{defn: sim}
Given words $\s w_1:=(\s w^{\R{l}}_1,v_1,\s w^{\R{r}}_1)$ and $\s w_2:=(\s w^{\R{l}}_2,v_2,\s w^{\R{r}}_2)$ over $\s M$, we say that $\s w_1\sim\s w_2$ if there exists $\overline{\s w}\in \s S(\s A)$ such that
\begin{enumerate}
    \item $\s w^{\R{l}}_1\s w^{\R{r}}_1=\s w^{\R{l}}_2\s w^{\R{r}}_2$;
    \item $\s w_1^{\R{l}}=\s w^{\R{l}}_2\overline{\s w}$;
    \item $\s e(\widetilde{\s t}(v_2,\overline{\s w}))=v_1$.
\end{enumerate}
The closure of $\sim$ under symmetricity is again denoted by $\sim$.
\end{defn}

The proof of the next result is straightforward, and hence omitted.
\begin{prop}
The relation $\sim$ defined above is an equivalence relation.
\end{prop}

\begin{defn}\label{defn: subword}
Given words $\s w_1:=(\s w^{\R{l}}_1,v_1,\s w^{\R{r}}_1)$ and  $\s w_2:=(\s w^{\R{l}}_2,v_2,\s w^{\R{r}}_2)$ over $\s M$, we say that $\s w_1$ is a subword of $\s w_2$, denoted $\s w_1\sqsubseteq\s w_2$, if there exists $\s w_3:=(\s w^{\R{l}}_3,v_1,\s w^{\R{r}}_3)\sim \s w_2$ such that $\s w^{\R{l}}_1\sqsubseteq_r\s w^{\R{l}}_3$ and $\s w^{\R{r}}_1\sqsubseteq_l\s w^{\R{r}}_3$. We say that $\s w_1$ is a proper subword of $\s w_2$, denoted $\s w_1\sqsubset\s w_2$ if $\s w_1\sqsubseteq\s w_2$ and $\s w_1\not\sim\s w_2$.
\end{defn}

\begin{defn}\label{defn: factor and image subword}
Given words  $\s w_1:=(\s w^{\R{l}}_1,v_1,\s w^{\R{r}}_1)\sqsubseteq\s w_2:=(\s w^{\R{l}}_2,v_2,\s w^{\R{r}}_2)$ over $\s M$, let $\s w_3\sim\s w_2$ be a word as in the \Cref{defn: subword}. We call $\s w_1$ a \emph{factor subword} of $\s w_2$ if one of the following holds:
\begin{itemize}
    \item $\s w_1=\s w_3$;
    \item $\s w^{\R{l}}_1=\s w^{\R{l}}_3$ and $\s w^{\R{r}}_1\s b\sqsubset_l\s w^{\R{r}}_3$ for some $\s b\in\s A$;
    \item $\s w^{\R{r}}_1=\s w^{\R{r}}_3$ and $\s b^{-1}\s w^{\R{l}}_1\sqsubset_r\s w^{\R{l}}_3$ for some $\s b\in\s A$;
    \item$\s b^{-1}\s w^{\R{l}}_1\sqsubset_r\s w^{\R{l}}_3$ and $\s w^{\R{r}}_1\s b'\sqsubset_l\s w^{\R{r}}_3$ for some $\s b\in\s A$ and $\s b'\in \s A$.
\end{itemize}
We call $\s w_1$ an \emph{image subword} of $\s w_2$ if one of the following holds:
\begin{itemize}
    \item $\s w_3=\s w_4$;
    \item $\s w^{\R{l}}_3=\s w^{\R{l}}_3$ and $\s w^{\R{r}}_3\s b^{-1}\sqsubset_l\s w^{\R{r}}_4$ for some $\s b\in\s A$;
    \item $\s w^{\R{r}}_3=\s w^{\R{r}}_3$ and $\s b\s w^{\R{l}}_3\sqsubset_r\s w^{\R{l}}_4$ for some $\s b\in\s A$;
    \item$\s b\s w^{\R{l}}_3\sqsubset_r\s w^{\R{l}}_4$ and $\s w^{\R{r}}_3{\s b'}^{-1}\sqsubset_l\s w^{\R{r}}_4$ for some $\s b\in\s A$ and $\s b'\in \s A$.
\end{itemize}

\end{defn}

\begin{defn}\label{defn: brick word}
Say that $\s w:=(\s w^{\R{l}},v,\s w^{\R{r}})\in{\s W}(\s M)$ is a \emph{brick word} if $\s w^{\R{l}}\s w^{\R{r}}$ is aperiodic and there does not exist a word $\overline{\s w}=(\overline{\s w}^{\R{l}},\overline{v},\overline{\s w}^{\R{r}})$ which appears as a factor subword of $\s w$ and an image subword of $\s w$ or $\s w^{-1}$, either $\overline{\s w}\sqsubset\s w$ or $\overline{\s w}\sqsubset\s w$ or $\s w^{-1}$. Denote the set of all brick words by $\s{Br}(\s M)$.
\end{defn}

\begin{defn}\label{defn: weak brick word}
Say that $\s w:=(\s w^{\R{l}},v,\s w^{\R{r}})\in{\s W}(\s M)$ is a \emph{weak brick word} if there does not exist a finite word $\overline{\s w}=(\overline{\s w}^{\R{l}},\overline{v},\overline{\s w}^{\R{r}})$ which appears as a factor subword of $\s w$ and an image subword of $\s w$ or $\s w^{-1}$, either $\overline{\s w}\sqsubset\s w$ or $\overline{\s w}\sqsubset\s w$ or $\s w^{-1}$. Denote the set of all weak brick words by $\weakbrick{M}$.
\end{defn}

\begin{rmk}
It is easy to see that $\s{Br}(\s M)\subseteq\weakbrick{M}$. For finite words in $\s W(\s M)$, both notions coincide.
\end{rmk}

\begin{rmk}
Let $\s w_1$ and $\s w_2$ be two words over $\s M$ such that $\s w_1\sim\s w_2$. If $\s w_1$ is a brick word (resp. weak brick word), then $\s w_2$ is so too.
\end{rmk}

We now introduce a ``special'' map between alphabets which will allow us to construct a new automaton keeping its language(=set of all words) intact.

\begin{defn}\label{defn: local bijection}
Let $\s M:=(\s M_0,\s I,\s e,(-)^{-1},\s A,\s t)$ be an MIA and $\varphi:\s A\to\s A'$ be a surjective map of alphabets. Say that $\varphi$ is a \emph{local bijection for $\s M$} if for every $v\in\s M_0$, the restriction of the induced map $\varphi^\pm:\Apm\to\Adpm$ on the set $\{\s b\in\s A^{\pm}\mid\s t(v,\s b) \text{ is defined}\}$ is injective.
\end{defn}

Given a local bijection $\varphi:\s A\to \s A'$ for $\s M$, we can define an MIA $\s M_{\varphi}:=(\s M_0,\s I,\s e,(-)^{-1},\s A',\s t')$ by $\s t'(v,\s b'):=\s t(v,\s b)$ for $v\in\s M_0,\ \s b'\in\Adpm$ if there is $\s b\in\Apm$ such that $\varphi^\pm(\s b)=\s b'$ and $\s t(v,\s b)$ is defined--the uniqueness of such $\s b$ is guaranteed by \Cref{defn: local bijection}. 

The map $\varphi$ can be extended to a map $\s S\varphi:\s S(\s A)\to\s S(\s A')$ in the obvious way.

\begin{prop}\label{prop: words are in bijection after changing alphabet}
The map $\Phi:\s W(\s M)\to\s W(\s M_{\varphi})$ defined by $\Phi(\s w^{\R{l}},v,\s w^{\R{r}}):=({\s S\varphi}(\s w^{\R{l}}),v,{\s S\varphi}(\s w^{\R{r}}))$ is bijective.
\end{prop}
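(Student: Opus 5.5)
The plan is a direct verification in three steps. First, $\Phi$ is well defined: the image of a word over $\s M$ is a word over $\s M_\varphi$. Second, $\Phi$ is injective. Third, $\Phi$ is surjective.

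The central tool will be a transport lemma, proved by induction on the length of a finite word $\s w\in(\Apm)^*$. It says that for every $u\in\s M_0$, if $\widetilde{\s t}(u,\s w)$ is defined, then $\widetilde{\s t}'(u,\s S\varphi(\s w))$ is defined and equals it. The inductive step is just the definition of $\s t'$, since $\s t'(x,\varphi^\pm(\s b))=\s t(x,\s b)$ whenever $\s t(x,\s b)$ is defined.

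Conversely, the lemma says that if $\widetilde{\s t}'(u,\s w')$ is defined for some $\s w'\in(\Adpm)^*$, then there is a unique $\s w\in(\Apm)^*$ with $\s S\varphi(\s w)=\s w'$ and $\widetilde{\s t}(u,\s w)$ defined. At each step from the current state $x$, the letter $\s b$ with $\varphi^\pm(\s b)=\s b'_i$ and $\s t(x,\s b)$ defined exists by the definition of $\s t'$. It is unique because $\varphi$ is a local bijection for $\s M$: $\varphi^\pm$ is injective on the letters defined at $x$. I also need $\varphi^\pm$ to commute with inversion, which is immediate since $\varphi^\pm(\s b^{-1})=\varphi(\s b)^{-1}$, so $\s S\varphi(\s w^{-1})=\s S\varphi(\s w)^{-1}$.

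For well-definedness, I apply the lemma to every finite left subword involved. This covers the condition that $(v,\s w^{\R r})$ is a right word, including the infinite case, where the condition is checked prefix by prefix. It covers the left word $(\s w^{\R l},v)$ via $(v^{-1},(\s w^{\R l})^{-1})$, again prefix by prefix. It also covers the compatibility condition on $(\s w^{\R l}\s w',\s e(\widetilde{\s t}(v,\s w')))$. Since the states reached coincide, the $\s e$-values coincide as well.

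For injectivity, suppose $\Phi(\s w_1)=\Phi(\s w_2)$. Then the basepoints agree, say both equal $v$. The right parts $\s w_1^{\R r}$ and $\s w_2^{\R r}$ both have defined transitions from $v$ along every prefix and have the same image. By the uniqueness half of the lemma applied prefix by prefix, they agree letter by letter. The left parts agree by the same argument, applied from $v^{-1}$ to their inverses.

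For surjectivity, take $(\s x^{\R l},v,\s x^{\R r})\in\s W(\s M_\varphi)$. I lift $\s x^{\R r}$ letter by letter from $v$, and lift $(\s x^{\R l})^{-1}$ letter by letter from $v^{-1}$. Uniqueness makes the finite lifts coherent, so for infinite words they assemble into a single infinite lift. It remains to check the compatibility condition for the lifted tuple. Here I apply the lemma to $(\s e(\widetilde{\s t}(v,\s w'))^{-1},\,\s w'^{-1}(\s w^{\R l})^{-1})$, whose image under $\s S\varphi$ is defined in $\s M_\varphi$. The lift of that image must be $\s w'^{-1}(\s w^{\R l})^{-1}$, because lifts are unique.

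The main obstacle is this last point. The uniqueness in the lemma is relative to a starting state, and for the compatibility condition that starting state is $\s e(\widetilde{\s t}(v,\s w'))^{-1}$ rather than $v^{-1}$. So the lift of $(\s x^{\R l})^{-1}$ made from $v^{-1}$ might a priori differ from the lift made from this other state. The fix is to reverse the order of the argument: the lemma produces a lift of the left-compatibility word from the new state, and I then show its tail coincides with the earlier lift. This should follow from unique lifting combined with condition (3) of \Cref{defn: finite ptd automaton}, which relates transitions from $v$ and from $\s e(v)$. I expect this to need some care, but once it is settled the rest of the argument is routine bookkeeping.
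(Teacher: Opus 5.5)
Your well-definedness and injectivity arguments are correct. Your route (unique letter-by-letter lifting from a fixed state, with the inverse defined componentwise) is exactly the paper's. The paper's proof, however, only writes down the componentwise inverse and never checks the compatibility clause in the definition of a word.

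The gap is in surjectivity, at the step you flag. You assert that the lift from $u:=\s e(\widetilde{\s t}(v,\s w'))^{-1}$ of $\s S\varphi((\s w')^{-1}(\s w^{\R{l}})^{-1})$ must be $(\s w')^{-1}(\s w^{\R{l}})^{-1}$ because lifts are unique. This is circular: uniqueness only compares words readable from $u$, and readability of $(\s w')^{-1}(\s w^{\R{l}})^{-1}$ from $u$ is exactly the compatibility condition to be proved. Your repair via condition (3) only controls the tail. It does show that the lift of $\s S\varphi((\s w^{\R{l}})^{-1})$ is $(\s w^{\R{l}})^{-1}$, but only once you know two things about the head: that the lift from $u$ of $\s S\varphi((\s w')^{-1})$ is $(\s w')^{-1}$, and that it ends at a state $y$ with $\s e(y)=v^{-1}$. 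No MIA axiom gives this. Condition (3) compares a state $x$ with $\s e(x)$; it never compares an initial state with its inverse, nor a letter $\s b$ with $\s b^{-1}$.

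In fact the statement fails for a general MIA, so no argument from the stated axioms can close this step. Take the following data:
\begin{itemize}
\item $\s A=\{a,b\}$, $\s A'=\{c\}$, and $\varphi(a)=\varphi(b)=c$;
\item $\s M_0=\s I=\{p,\bar p\}$ with $p^{-1}=\bar p$, and $\s e$ the identity;
\item only the transitions $\s t(p,a)=p$ and $\s t(\bar p,b^{-1})=\bar p$.
\end{itemize}
All axioms hold and $\varphi$ is a local bijection. Then $\s t'(p,c)=p$ and $\s t'(\bar p,c^{-1})=\bar p$, so $(\epsilon,p,c)\in\s W(\s M_\varphi)$. Its only candidate preimage is $(\epsilon,p,a)$, which is not a word over $\s M$: the compatibility clause for $\s w'=a$ requires $\s t(\bar p,a^{-1})$ to be defined.

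What is missing is a reversibility hypothesis, for instance: whenever $\s t(x,\s b)$ is defined, $\s t(\s e(\s t(x,\s b))^{-1},\s b^{-1})$ is defined and its $\s e$-value is $\s e(x)^{-1}$. This holds in $\s M_\Lambda$, because $\s b^{-1}$ can always be read from $1_{(t(\s b),-\varepsilon(\s b))}$, landing at $\s e$-value $1_{(s(\s b),\varsigma(\s b))}=\s e(x)^{-1}$.

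With this hypothesis, write $\s w'=\alpha_1\cdots\alpha_k$. Induction shows that the first $k-i$ lifted letters read from $u$ are $\alpha_k^{-1},\dots,\alpha_{i+1}^{-1}$, and that they lead to a state with $\s e$-value $\s e(\widetilde{\s t}(v,\alpha_1\cdots\alpha_i))^{-1}$. In particular this $\s e$-value is $v^{-1}$ when $i=0$. Along $(\s w^{\R{l}})^{-1}$ the $\s e$-values then agree, by condition (3), with those of the reading from $v^{-1}$. At every step the lifted letter and the intended letter are both defined at the same initial state, so local bijectivity there forces them to coincide. This is your condition-(3) induction run over the whole word. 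It proves the proposition for $\s M_\Lambda$ and $\delta$, which is the only case used later.
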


\begin{proof}
The definition of a local bijection guarantees that, for each $v\in\s M_0$, the restriction of $\s S\varphi$ to $\{\s w\mid (v,\s w)\in\overrightarrow{\s W}(\s M)\}$ is injective with image $\{\s w'\mid (v,\s w')\in\overrightarrow{\s W}(\s M_\varphi)\}$. Let $(\s S\varphi)^{-1}(v,-)$ denote the inverse of this restriction. Similarly, denote by $(\s S\varphi)^{-1}(-,v)$ the inverse of the restriction of $\s S\varphi$ to $\{\s w\mid (\s w,v)\in\overleftarrow{\s W}(\s M)\}$. The inverse of $\Phi$ is given by $((\s w')^{\R l},v,(\s w')^{\R r})\mapsto((\s S\varphi)^{-1}((\s w')^{\R l},v),v,(\s S\varphi)^{-1}(v,(\s w')^{\R r}))$.
\end{proof}





The above bijection $\Phi$ also preserves and reflects brick words and weak brick words.

\begin{cor}\label{thm: (weak) bricks are bijective}
Let $\s M:=(\s M_0,\s I,\s e,(-)^{-1},\s A,\s t)$ be an MIA and $\varphi:\s A\to\s A'$ be a local bijection for $\s M$. The bijection $\Phi:\s W(\s M)\to\s W(\s M_{\varphi})$ defined in \Cref{prop: words are in bijection after changing alphabet} restricts to bijections $\weakbrick{M}\cong\weakbrick{M_\varphi}$ and $\s{Br}(\s M)\cong\s{Br}(\s M_\varphi)$.
\end{cor}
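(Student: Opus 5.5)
The plan is to show that $\Phi$ and $\Phi^{-1}$ preserve every ingredient of the definitions of brick and weak brick words: aperiodicity of $\s w^{\R l}\s w^{\R r}$, finiteness, the relation $\sim$, the subword relation $\sqsubseteq$, inversion, and the factor/image subword conditions. Once these are established, a word $\s w$ admits a witness $\overline{\s w}$ violating (weak) brickness if and only if $\Phi(\s w)$ admits the witness $\Phi(\overline{\s w})$. By \Cref{prop: words are in bijection after changing alphabet}, every word over $\s M_\varphi$ is $\Phi$ of a word over $\s M$, so this gives both directions.

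\textbf{Step 1: the underlying words.} $\s S\varphi$ is defined letterwise and commutes with $(-)^{-1}$ because $\varphi^\pm$ does. The key observation is that along a word over $\s M$, each letter is read at a definite state where a transition is defined. By \Cref{defn: local bijection}, the letter read at that state can be recovered from its image under $\varphi^\pm$. Hence two words over $\s M$ passing through the same states are equal if and only if their images are equal. Applying this to shifts of $\s w^{\R l}\s w^{\R r}$ needs care, since the letters at positions $n$ and $n+k$ may be read at different states. So I will argue directly with gap labels: $\s w$ is periodic with period $k$ exactly when the gap-labelled word is shift-invariant. Because transitions in $\s M_\varphi$ are $\s t'(v,\varphi^\pm(\s b))=\s t(v,\s b)$, the state sequences of $\s w$ and $\Phi(\s w)$ coincide. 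Shift-invariance of the letters together with the state sequence is then equivalent before and after applying $\Phi$.

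I expect this step to be the main obstacle. The aperiodicity clause in \Cref{defn: brick word} is stated for the bare word $\s w^{\R l}\s w^{\R r}$, not the gap-labelled word. I would first try to show that periodicity of the bare word forces eventual periodicity of the states: the states along the word lie in a finite set, so some state must recur at positions that differ by a multiple of the period. If that argument fails, I would fall back on a weaker statement: almost periodicity of the letters forces almost periodicity of the states, and in that case injectivity at the recurring state transfers the period. Finiteness is clearly preserved since $\Phi$ preserves lengths.

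\textbf{Step 2: $\sim$ and $\sqsubseteq$.} The same state-sequence principle handles these relations. For $\sim$, condition (3) of \Cref{defn: sim} involves $\widetilde{\s t}(v_2,\overline{\s w})$, and this equals $\widetilde{\s t'}(v_2,\s S\varphi(\overline{\s w}))$. Conditions (1) and (2) are letterwise, and their converses hold by injectivity along paths from a fixed state. Thus $\s w_1\sim\s w_2$ if and only if $\Phi(\s w_1)\sim\Phi(\s w_2)$. Similarly, the left/right subword relations at a common basepoint $v_1$ correspond under $\Phi$, so $\sqsubseteq$ and $\sqsubset$ are preserved and reflected.

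\textbf{Step 3: factor and image subwords.} A factor subword is determined by a condition of the form $\s w^{\R r}_1\s b\sqsubset_l\s w^{\R r}_3$ with $\s b\in\s A$, and symmetrically on the left; image subwords use the inverse letters. Since $\varphi^\pm$ maps $\s A$ to $\s A'$ and $\s A^{-1}$ to $\s A'^{-1}$, the direct/inverse nature of each letter is unchanged. Hence each of the four clauses for factor subwords, and each for image subwords, transfers along $\Phi$ in both directions. The required reflection again uses uniqueness of the letter read at a given state.

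Combining Steps 1–3 shows that $\Phi$ preserves and reflects membership in $\s{Br}$ and $\weakbrick{}$. Since $\Phi$ is a bijection by \Cref{prop: words are in bijection after changing alphabet}, it restricts to the claimed bijections.
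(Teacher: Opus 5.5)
Your proposal is correct and follows the same route as the paper: show that $\Phi$ preserves and reflects $\sim$, the subword relation, and the factor/image clauses (because $\varphi^\pm$ sends $\s A$ into $\s A'$ and $\s A^{-1}$ into $(\s A')^{-1}$), and then conclude. Your pigeonhole argument for the aperiodicity clause is correct and is genuinely needed for the direction $\s{Br}(\s M)\to\s{Br}(\s M_\varphi)$, a point the paper's proof does not address, so no fallback is required; one small precision is that when reflecting $\sim$, the tail of $\s w_2^{\R r}$ after $\overline{\s w}$ is read from $q=\widetilde{\s t}(v_2,\overline{\s w})$ while $\s w_1^{\R r}$ is read from $v_1=\s e(q)$, so the letter-by-letter comparison should use MIA axiom (3) together with injectivity at the $\s e$-values, rather than running along literally identical states.
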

\begin{proof}
The $\sim$-equivalence classes are preserved and reflected under $\Phi$. Hence the subwords are preserved and reflected under $\Phi$. The map $\s S\varphi$ sends letters in $\s A$ to $\s A'$ and $\s A'$ to $\s A'^{-1}$, and therefore $\Phi$ sends factor (resp. image) subwords to factor (resp. image) subwords. Hence, it is readily verified that $\Phi$ preserves and reflects bricks and weak bricks.
\end{proof}

\section{Strings as $\sim$-equivalence classes of words over an MIA}
Given a string algebra $\Lambda$, we now give a prescription to build an automaton $\s M_\Lambda$ that allows us to view strings for $\Lambda$ via words over $\s M_\Lambda$.

\begin{defn}\label{defn: MIA from string alg}
Let $\Lambda:=\C KQ/\ang{\rho}$ be a string algebra with $\St$ as its set of strings. The \emph{automaton associated with} $\Lambda$ is $\s M_{\Lambda}:=({\s M_{\Lambda}}_0,\s I_{\Lambda},\s e_{\Lambda},(-)^{-1},\s A_{\Lambda},\s t_\Lambda)$, where
\begin{align*}
    \s A_{\Lambda}&:=Q_1;\\
    \s I_\Lambda&:=\{\f x\in\St\mid|\f x|=0\};\\
    1_{(v,i)}^{-1}&:=1_{(v,-i)};\\
    {\s M_{\Lambda}}_0&:=\s I_\Lambda\cup Q_1\sqcup Q_1^{-1}\cup\{\f y\in\St\mid\f y\sqsubset_l\f x\text{, and } \{\f x,\f x^{-1}\}\cap\rho=1\};\text{ and}\\    
    \s t_\Lambda&:=\{(\f x,\s b,\f y)\mid \f x\s b\in\St,\  \f y\text{ is the maximal right substring of }\f x\s b\text{ such that }\f y\in {\s M_{\Lambda}}_0\}; \text{and}\\
    \s e_{\Lambda}(\f x)&:=1_{(t(\f x),\varepsilon(\f x))}.
\end{align*}
\end{defn}

\begin{prop}
The tuple $\s M_{\Lambda}$ is an MIA.
\end{prop}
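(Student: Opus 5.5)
We have to check each clause of \Cref{defn: finite ptd automaton} for the tuple of \Cref{defn: MIA from string alg}. The plan is to deal first with the purely set-theoretic data (finiteness of ${\s M_\Lambda}_0$, the involution on $\s I_\Lambda$, and $\s e_\Lambda$ fixing $\s I_\Lambda$ and landing in $\s I_\Lambda$), then show that $\s t_\Lambda$ is a well-defined partial function, and finally verify the compatibility condition (3).

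\textbf{Set-theoretic data.} Finiteness of ${\s M_\Lambda}_0$ holds because $Q_0$, $Q_1$ and $\rho$ are finite and only finitely many strings are left substrings of strings $\f x$ with $\f x$ or $\f x^{-1}$ in $\rho$. The map $1_{(v,i)}\mapsto 1_{(v,-i)}$ is an involution on $\s I_\Lambda$ with no fixed points, since $i\neq -i$. Since $t(\f x)\in Q_0$ and $\varepsilon(\f x)\in\{\pm1\}$, the value $\s e_\Lambda(\f x)=1_{(t(\f x),\varepsilon(\f x))}$ lies in $\s I_\Lambda$. For $\f x=1_{(v,i)}$ we have $t(\f x)=v$ and $\varepsilon(1_{(v,i)})=i$, so $\s e_\Lambda(1_{(v,i)})=1_{(v,i)}$. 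This gives condition (2).

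\textbf{Well-definedness of $\s t_\Lambda$.} Given $\f x$ and $\s b$ with $\f x\s b\in\St$, the last syllable $\s b$ of $\f x\s b$ lies in $Q_1\sqcup Q_1^{-1}\subseteq{\s M_\Lambda}_0$. So the set of right substrings of $\f x\s b$ belonging to ${\s M_\Lambda}_0$ is nonempty. Right substrings are totally ordered by length, so the maximal one $\f y$ exists and is unique, and $\s t_\Lambda$ is a partial function. A point to watch is the case where $\f x=1_{(v,i)}$ is a zero-length string: then ``$\f x\s b\in\St$'' should be read via the concatenation convention, namely $s(\s b)=v$ and $\varsigma(\s b)=-i$. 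This convention lets the transition from $1_{(v,i)}$ encode the sign $\varepsilon$ of the previous syllable.

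\textbf{Condition (3).} This is the main step. Suppose $\s t_\Lambda(\f x,\s b)=\f y$ is defined, so $\f x\s b\in\St$. Then $t(\f x)=s(\s b)$ and $\varsigma(\s b)=-\varepsilon(\f x)$, so $1_{(t(\f x),\varepsilon(\f x))}\s b$ is defined and equals $\s b$, which is a string. Hence $\s t_\Lambda(\s e_\Lambda(\f x),\s b)$ is defined, and since $\s b$ itself is in ${\s M_\Lambda}_0$ its value is $\s b$. It remains to show $\s e_\Lambda(\f y)=\s e_\Lambda(\s b)$. Both $\f y$ and $\s b$ are right substrings of $\f x\s b$ of positive length, so both end in the syllable $\s b$. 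Therefore $t(\f y)=t(\s b)$ and $\varepsilon(\f y)=\varepsilon(\s b)$, which gives equality.

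The only genuine obstacle I expect is bookkeeping with the zero-length strings, i.e. checking that the concatenation conventions for $1_{(v,i)}$ behave correctly in the definition of $\s t_\Lambda$ and in $\s e_\Lambda$. Once $\f y$ is seen to be a positive-length right substring ending in $\s b$, everything else is immediate.
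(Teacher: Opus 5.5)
Your proposal is correct, but it distributes the work differently from the paper. The paper treats everything as routine except the claim that $\s t_\Lambda$ is a partial function. It derives that claim from \Cref{rmk: outdegree < 2}, i.e.\ from Condition (II), together with (I) and the sign conventions (a)--(c): from any state there is at most one outgoing transition on a direct letter and at most one on an inverse letter.

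You observe instead that single-valuedness is automatic from the clause ``maximal right substring''. The right substrings of $\f x\s b$ form a chain, and $\s b$ itself is a candidate, so no string-algebra axiom is needed at that point. You then put the effort into axiom (3), which the paper passes over. From $\f x\s b\in\St$ you get $\varsigma(\s b)=-\varepsilon(\f x)$, so $\s t_\Lambda(\s e_\Lambda(\f x),\s b)=\s b$. Both $\s b$ and $\s t_\Lambda(\f x,\s b)$ are positive-length right substrings ending in the syllable $\s b$, hence they have the same image under $\s e_\Lambda$.

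Your route actually checks every clause of \Cref{defn: finite ptd automaton}. This includes finiteness of ${\s M_\Lambda}_0$, where you implicitly use that $\rho$ is finite, which is the standard assumption. The paper's appeal to Condition (II) buys a stronger determinism statement: at most one letter of each sign can be read from a given state. That is not needed for $\s M_\Lambda$ to be an MIA, but it is exactly what is used afterwards to show that $\delta$ is a local bijection for $\s M_\Lambda$.
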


The only non-trivial part in the proof of the above proposition is whether $\s t_\Lambda$ is a partial function. It follows from \Cref{rmk: outdegree < 2} below, which is followed from Condition $(II)$ in the definition of a string algebra.

\begin{rmk}\label{rmk: outdegree < 2}
Given a state $\f x\in {\s M_{\Lambda}}_0$, we have $$|\{\f y\in{\s M_{\Lambda}}_0\mid (\f x,\s b,\f y)\in \s t_\Lambda\text{ for some }\s b\in\s A_{\Lambda}\}|\leq1,\text{ and}$$
$$|\{\f y\in{\s M_{\Lambda}}_0\mid (\f x,\s b,\f y)\in \s t_\Lambda\text{ for some }\s b\in\s A_{\Lambda}^{-1}\}|\leq1.$$
\end{rmk}

\begin{rmk}
Each element of $\s I$ in $\s M_\Lambda$ is a source, i.e. for $v\in\s I$, there does not exist $v'\in{\s M_\Lambda}_0$ and $\s b\in\s A_{\Lambda}$ such that $\s t(v',\s b)=v$.
\end{rmk}

Let us recall from \cite[\S~1]{mousavand2023tau} the class of (generalised) barbell algebras. We present in Figure \ref{fig: barbell} a particular generalised barbell algebra and use it as a running example.

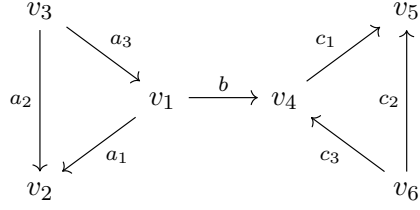
\begin{figure}[h]
    \centering
    \begin{tikzcd}
v_3 \arrow[rd, "a_3"] \arrow[dd, "a_2"'] &                                      &                       & v_5                                     \\
                                         & v_1 \arrow[r, "b"] \arrow[ld, "a_1"] & v_4 \arrow[ru, "c_1"] &                                         \\
v_2                                      &                                      &                       & v_6 \arrow[lu, "c_3"] \arrow[uu, "c_2"]
\end{tikzcd}
    \caption{$\Gamma$ with $\rho=\{a_3bc_1,a_3a_1,c_3c_1\}$}
    \label{fig: barbell}
\end{figure}

\begin{exmp}\label{exmp: barbell automaton}
Consider the string algebra $\Gamma$ in Figure \ref{fig: barbell}. We fix $\varsigma$ and $\varepsilon$ in the following way:
\begin{align*}
(\varsigma(a_1),\varepsilon(a_1))=(1,-1),\,&(\varsigma(a_2),\varepsilon(a_2))=(1,1),\,(\varsigma(a_3),\varepsilon(a_3))=(-1,1),\\
&(\varsigma(b),\varepsilon(b))=(-1,1),\\
(\varsigma(c_1),\varepsilon(c_1))=(-1,1),\,&(\varsigma(c_2),\varepsilon(c_2))=(1,-1),\,(\varsigma(c_3),\varepsilon(c_3))=(-1,-1).
\end{align*}

\Cref{fig: barbell automaton} shows the underlying automaton of the MIA $\s M_\Gamma$.
\end{exmp}
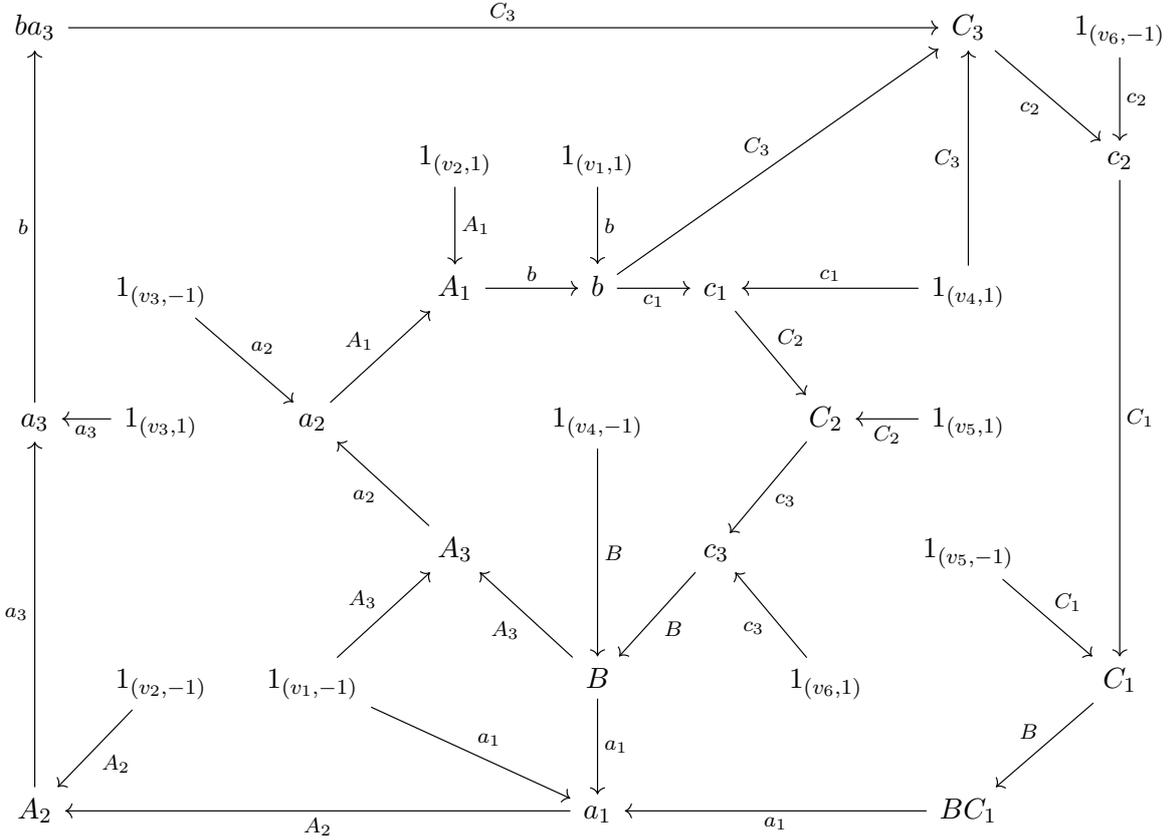
\begin{figure}[h]
   \[\begin{tikzcd}[ampersand replacement=\&,column sep=small, row sep=large]
	{ba_3} \&\&\&\&\&\&\& {C_3} \& {{1_{(v_6,-1)}}} \\
	\&\&\& {{1_{(v_2,1)}}} \& {{1_{(v_1,1)}}} \&\&\&\& {c_2} \\
	\& {{1_{(v_3,-1)}}} \&\& {A_1} \& b \& {c_1} \&\& {{1_{(v_4,1)}}} \\
	{a_3} \& {{1_{(v_3,1)}}} \& {a_2} \&\& {{1_{(v_4,-1)}}} \&\& {C_2} \& {1_{(v_5,1)}} \\
	\&\&\& {A_3} \&\& {c_3} \&\& {{1_{(v_5,-1)}}} \\
	\& {{1_{(v_2,-1)}}} \& {{1_{(v_1,-1)}}} \&\& B \&\& {{1_{(v_6,1)}}} \&\& {C_1} \\
	{A_2} \&\&\&\& {a_1} \&\&\& {BC_1}
	\arrow["C_3", from=1-1, to=1-8]
	\arrow["{c_2}"', from=1-8, to=2-9]
	\arrow["{c_2}", from=1-9, to=2-9]
	\arrow["{A_1}", from=2-4, to=3-4]
	\arrow["b", from=2-5, to=3-5]
	\arrow["{C_1}", from=2-9, to=6-9]
	\arrow["{a_2}", from=3-2, to=4-3]
	\arrow["b", from=3-4, to=3-5]
	\arrow["{C_3}", from=3-5, to=1-8]
	\arrow["{c_1}"', from=3-5, to=3-6]
	\arrow["{C_2}", from=3-6, to=4-7]
	\arrow["{C_3}", from=3-8, to=1-8]
	\arrow["{c_1}"', from=3-8, to=3-6]
	\arrow["b", from=4-1, to=1-1]
	\arrow["{a_3}", from=4-2, to=4-1]
	\arrow["{A_1}", from=4-3, to=3-4]
	\arrow["B", from=4-5, to=6-5]
	\arrow["{c_3}", from=4-7, to=5-6]
	\arrow["{C_2}", from=4-8, to=4-7]
	\arrow["{a_2}", from=5-4, to=4-3]
	\arrow["B", from=5-6, to=6-5]
	\arrow["{C_1}", from=5-8, to=6-9]
	\arrow["{A_2}", from=6-2, to=7-1]
	\arrow["{A_3}", from=6-3, to=5-4]
	\arrow["{a_1}", from=6-3, to=7-5]
	\arrow["{A_3}", from=6-5, to=5-4]
	\arrow["{a_1}", from=6-5, to=7-5]
	\arrow["{c_3}", from=6-7, to=5-6]
	\arrow["B"', from=6-9, to=7-8]
	\arrow["{a_3}", from=7-1, to=4-1]
	\arrow["{A_2}", from=7-5, to=7-1]
	\arrow["{a_1}", from=7-8, to=7-5]
\end{tikzcd}\]
    \caption{The underlying automaton of the MIA associated with the algebra in \Cref{exmp: barbell automaton}}
    \label{fig: barbell automaton}
\end{figure}

\begin{thm}\label{prop: string are words}
We have $\St\cong\s W(\s M_\Lambda)/_\sim$.
\end{thm}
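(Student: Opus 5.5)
We will construct an explicit map $\Theta:\s W(\s M_\Lambda)\to\St$, show that it is constant on $\sim$-classes, and show that the induced map on $\s W(\s M_\Lambda)/_\sim$ is a bijection.

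\textbf{Step 1: reading off syllables.} Write a word as $(\s w^{\R l},v,\s w^{\R r})$ with $v=1_{(u,i)}\in\s I_\Lambda$. The first task is to check by induction on length that, for a finite right word $(v,\s w)$, the string $v\s w$ read as a sequence of syllables is a string. Concretely, $\widetilde{\s t}(v,\s w)$ is defined if and only if $v\s w\in\St$, and then $\widetilde{\s t}(v,\s w)$ is the maximal right substring of $v\s w$ lying in ${\s M_\Lambda}_0$. The content of this check is that ${\s M_\Lambda}_0$ records exactly the suffixes needed to detect a relation in $\rho$: these are the proper left substrings of relations or of their inverses, together with single syllables and the trivial strings. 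So appending a syllable $\s b$ to a string $\f x$ yields a string if and only if appending $\s b$ to the longest suffix of $\f x$ in ${\s M_\Lambda}_0$ does. This uses that $\rho$ consists of paths, that the inverse-syllable condition $\alpha_i\neq\alpha_{i+1}^{-1}$ is visible from the last syllable (which is itself a state), and that $\varsigma,\varepsilon$ are matched via $\s e_\Lambda$.

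Passing to limits gives the same statement for $\mathbb N$-words, and applying inverses gives it for left words. For a full word, the compatibility condition in the definition of a word, namely $(\s w^{\R l}\s w',\s e(\widetilde{\s t}(v,\s w')))\in\leftword$, together with the fact that relations have bounded length (Condition \ref{admissible}), shows that $\s w^{\R l}\s w^{\R r}$ has no forbidden subpath crossing the basepoint. Hence $\Theta(\s w):=\s w^{\R l}v\s w^{\R r}\in\St$, where $v$ fixes the vertex and sign when $\s w^{\R l}\s w^{\R r}$ is empty.

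\textbf{Step 2: $\Theta$ is constant on $\sim$-classes.} If $\s w_1\sim\s w_2$, then the underlying words agree, and condition (3) of \Cref{defn: sim} forces the basepoint $v_1$ to be $\s e_\Lambda$ of the state reached. Since $\s e_\Lambda(\f x)=1_{(t(\f x),\varepsilon(\f x))}$, this is exactly the trivial string that concatenates with the prefix. The two words therefore give the same string; the only point needing attention is that the zero-length case records the correct sign.

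\textbf{Step 3: bijectivity.} For surjectivity, given a finite or infinite string $\f x$, split it at any gap $\f x=\f x_1\f x_2$, take $v:=1_{(t(\f x_1),\varepsilon(\f x_1))}$, and apply Step 1 to see that $(\f x_1,v,\f x_2)$ is a word; for $\mathbb N$-strings and $\mathbb Z$-strings choose the gap appropriately. For injectivity, if $\Theta(\s w_1)=\Theta(\s w_2)$, the two words have the same underlying word and differ only in the gap chosen. Moving the basepoint from one gap to the other along $\overline{\s w}$, the state reached satisfies $\s e_\Lambda(\widetilde{\s t}(v_2,\overline{\s w}))=1_{(t,\varepsilon)}$ of the prefix, which is $v_1$ by Step 1. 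Hence $\s w_1\sim\s w_2$.

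The main obstacle is Step 1, in particular showing that the truncated state ``maximal right substring in ${\s M_\Lambda}_0$'' carries enough memory to detect every relation, including relations $\alpha_{i+j}^{-1}\cdots\alpha_i^{-1}\in\rho$ traversed inversely, and relations straddling the basepoint. I would first prove a lemma: if $\f x\s b\in\St$ and $\f y$ is its maximal right substring in ${\s M_\Lambda}_0$, then for every syllable $\s c$, $\f x\s b\s c\in\St$ if and only if $\f y\s c\in\St$. I expect to prove it by taking a hypothetical relation ending at $\s c$ and observing that its proper prefix lies in ${\s M_\Lambda}_0$ and is a suffix of $\f x\s b$, so it is contained in $\f y$ by maximality.
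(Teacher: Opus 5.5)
Your proposal is correct and follows essentially the same route as the paper. Your $\Theta$ is the paper's map $\f{St}([\f x^{\R{l}},1_{(v,i)},\f x^{\R{r}}]_\sim)=\f x^{\R{l}}\f x^{\R{r}}$, and your surjectivity and injectivity steps amount to the paper's inverse map $\widetilde{\s W}$ together with its observation that moving the splitting point stays within one $\sim$-class.

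The one real addition is that your maximal-suffix lemma proves that $\widetilde{\s t}(v,\s w)$ is defined exactly when $v\s w$ is a string, which the paper dismisses as ``easily checked''. Your appeal to the bounded-length condition (III) in that step is unnecessary, though: the compatibility condition in the definition of a word already makes every $\s w^{\R{l}}\s w'$ a string.
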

\begin{proof}
Let $\f x\in\St$. Then $\f x=\f x1_{(t(\f x),\varepsilon(\f x))}$. Consider the map $\widetilde{\s W}:\St\to\s W(\s M_{\Lambda})/_\sim$ defined by $\widetilde{\s W}(\f x):=[(\f x,1_{(t(\f x),\varepsilon(\f x))},1_{(t(\f x),\varepsilon(\f x))})]_{\sim}$. If $\f x=\f x^{\R{l}}\f x^{\R{r}}$ for some $\f x^{\R{l}},\f x^{\R{r}}\in\St$, then $(\f x,1_{(t(\f x),\varepsilon(\f x))},1_{(t(\f x),\varepsilon(\f x))})\sim(\f x^{\R{l}},1_{(t(\f x^{\R{l}}),\varepsilon(\f x^{\R{l}}))},\f x^{\R{r}})$, and therefore the map $\widetilde{\s W}$ is well-defined.

On the other hand, consider the map $\f{St}:\s W(\s M_\Lambda)/_\sim\to\St$ defined by $\f{St}([\f x^{\R{l}},1_{(v,i)},\f x^{\R{r}}]_{\sim}):=\f x^{\R{l}}\f x^{\R{r}}$. This map is well-defined in view of Condition (1) of \Cref{defn: sim}. It can be easily checked that the maps $\widetilde{\s W}$ and $\f{St}$ are inverses of each other.
\end{proof}

The following lemmas are restatements of Theorems \ref{prop: string brick criteria} and \ref{prop: band brick criteria}.

\begin{lem}\label{prop: string brick characterisation for string algebra automaton}
Let $\f x\in\St$. Then $M(\f x)$ is a brick if and only if $\widetilde{\s W}(\f x)\subseteq\s{Br}(\s M_{\Lambda})$.
\end{lem}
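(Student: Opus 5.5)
The plan is to reduce the lemma to \Cref{prop: string brick criteria} by transporting every notion appearing there across the bijection $\widetilde{\s W}$ of \Cref{prop: string are words}. Since brick words are closed under $\sim$ (the remark following \Cref{defn: weak brick word}), the condition $\widetilde{\s W}(\f x)\subseteq\s{Br}(\s M_\Lambda)$ holds if and only if the single representative $\s w_{\f x}:=(\f x,1_{(t(\f x),\varepsilon(\f x))},1_{(t(\f x),\varepsilon(\f x))})$ is a brick word. It therefore suffices to show that $M(\f x)$ is a brick if and only if $\s w_{\f x}\in\s{Br}(\s M_\Lambda)$.

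The first step is a dictionary between substrings and subwords. I would show that for every word $\s w=(\s w^{\R l},v,\s w^{\R r})$ over $\s M_\Lambda$, the map $\f{St}$ induces a bijection between $\sim$-classes of subwords of $\s w$ in the sense of \Cref{defn: subword} and substrings of $\f{St}([\s w])=\s w^{\R l}\s w^{\R r}$ (counted with position). The argument runs as follows. A subword $\s w_1\sqsubseteq\s w$ comes with $\s w_3\sim\s w$ with the same basepoint $v_1$. By condition (1) of \Cref{defn: sim}, $\s w_3^{\R l}\s w_3^{\R r}=\s w^{\R l}\s w^{\R r}=\f x$. Then $\s w_1^{\R l}\s w_1^{\R r}$ is a substring of $\f x$, and it is a genuine string because substrings of strings are strings. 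Conversely, given a decomposition $\f x=\f u\f y\f v$, I would place the basepoint at the gap after $\f u$. The state $\s e_\Lambda(\widetilde{\s t}_\Lambda(\cdot,\f u))$ equals $1_{(t(\f u),\varepsilon(\f u))}$, and $\f y$ is read from that initial state. To justify this I would verify that $\s t_\Lambda$ records the last maximal relation-relevant suffix, so that $\s e_\Lambda$ of any reached state is $1_{(t,\varepsilon)}$ of the string read so far. Under this correspondence, $\s w^{-1}$ corresponds to $\f x^{-1}$, using the involution on $\s I_\Lambda$ given by $1_{(v,i)}^{-1}=1_{(v,-i)}$.

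The second step matches the decorations. By definition, a factor subword is one whose neighbours in $\s w_3$ are a letter $\s b\in\s A_\Lambda=Q_1$ on the right and an inverse letter $\s b^{-1}\in Q_1^{-1}$ on the left, or which reaches the end of the word on that side. Under the dictionary, this is exactly $\beta^{-1}\f y\alpha\sqsubseteq\f x$ together with its boundary variants in \Cref{defn:factor image}. The image case is dual, with the letter types swapped. The typos in \Cref{defn: factor and image subword} (indices $3,4$) should be read as the evident dual of the factor clauses. Finally, aperiodicity of $\s w^{\R l}\s w^{\R r}$ as a word over $\Apm$ is literally aperiodicity of the string $\f x$, because the syllable sequences coincide.

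Combining these steps, the absence of a common factor/image subword of $\s w_{\f x}$ and of $\s w_{\f x}$ or $\s w_{\f x}^{-1}$, together with aperiodicity, matches term by term the hypothesis of \Cref{prop: string brick criteria}. I expect the main obstacle to be the first step. Specifically, one must check that subwords, which are defined through $\sim$ and the $\s e$-labels of gaps, impose no extra constraint beyond being a substring. In particular, one must confirm that the basepoint label at every gap is forced, namely $1_{(t,\varepsilon)}$ of the prefix, so that two subwords with the same underlying substring are $\sim$-equivalent. I would try to prove this by induction on the length of the prefix, using the definition of $\s t_\Lambda$ and $\s e_\Lambda(\f y)=1_{(t(\f y),\varepsilon(\f y))}$ together with the fact that maximal right substrings share $t$ and $\varepsilon$ with the whole string.
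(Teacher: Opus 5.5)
Your proposal is correct and follows the same route as the paper. Both transport substrings to subwords, and factor/image substrings to factor/image subwords, under $\widetilde{\s W}$ (with $\s w^{-1}$ corresponding to $\f x^{-1}$), and then invoke \Cref{prop: string brick criteria}. The details you add are exactly what the paper's one-line claim that ``substrings are sent to subwords'' leaves implicit: reducing to a single representative via $\sim$-invariance of brick words, and checking by induction that every gap label is forced to be $1_{(t,\varepsilon)}$ of the prefix read so far.
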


\begin{lem}\label{prop: band brick characterisation for string algebra automaton}
Let $\f b\in\Ba$, $l\in\mathbb N$, $\lambda\in\C K^\times$. Then $B(\f b,l,\lambda)$ is a brick if and only if $l=1$ and $\widetilde{\s W}(\,^\infty\f b^\infty)\subseteq\s{Br}^{\R{weak}}(\s M_{\Lambda})$.
\end{lem}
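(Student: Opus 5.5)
The plan is to deduce the statement from \Cref{prop: band brick criteria} by translating each clause of Krause's criterion into the language of words over $\s M_\Lambda$. The factor$=l=1$ part passes through unchanged. What needs work is showing that the following two conditions are equivalent:
\begin{itemize}
\item no finite string $\f y$ is a factor substring of $\INF{\f b}$ and at the same time an image substring of $\INF{\f b}$ or $\INF{(\f b^{-1})}$;
\item every word in the class $\widetilde{\s W}(\INF{\f b})$ is a weak brick word.
\end{itemize}
Since weak brickness is invariant under $\sim$ by the earlier remark, it suffices to check this for a single representative $\s w=(\s w^{\R l},v,\s w^{\R r})$ with $\s w^{\R l}\s w^{\R r}=\INF{\f b}$.

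The first step is to compare subwords with substrings. By \Cref{prop: string are words}, a finite word $\overline{\s w}=(\overline{\s w}^{\R l},\overline v,\overline{\s w}^{\R r})$ over $\s M_\Lambda$ corresponds to the finite string $\overline{\s w}^{\R l}\overline{\s w}^{\R r}$. I will show that $\overline{\s w}\sqsubseteq\s w$ in the sense of \Cref{defn: subword} exactly when this string is a substring of $\INF{\f b}$ at the position marked by the basepoint $\overline v$.

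\begin{itemize}
\item \emph{From words to strings.} Suppose we have the witness $\s w_3\sim\s w$ carrying basepoint $\overline v$. Condition (3) of \Cref{defn: sim} together with $\s e_\Lambda(\f x)=1_{(t(\f x),\varepsilon(\f x))}$ shows that the gap label at a position records the vertex and the sign $\varepsilon$ of the prefix. Hence the basepoint carries no information beyond the location of the gap, and the subword gives a substring.
\item \emph{From strings to words.} An occurrence of a substring determines a gap of $\INF{\f b}$, and the state at that gap is $\s e_\Lambda$ of the prefix. This produces a suitable $\s w_3$.
\end{itemize}

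The second step is to match the four clauses of \Cref{defn: factor and image subword} with the four clauses of \Cref{defn:factor image}. Letters of $\s A_\Lambda=Q_1$ are direct arrows, so a clause such as ``$\s w_1^{\R r}\s b\sqsubset_l\s w_3^{\R r}$ with $\s b\in\s A$'' is literally ``$\f u\alpha\sqsubseteq_l$'', and similarly for the others. Inverse letters play the role of $\beta^{-1}$ for factors and of $\alpha^{-1}$ for images. For image subwords of $\s w^{-1}$, I use that $\s w^{-1}$ corresponds to the string $\INF{(\f b^{-1})}$. This needs a short check that inversion of words, via the involution on $\s I_\Lambda$ given by $1_{(v,i)}^{-1}=1_{(v,-i)}$, is compatible with inversion of strings under $\widetilde{\s W}$.

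The third step handles infinite words. The case $\overline{\s w}=\s w_3$, which is the whole word, is excluded because weak brick words only quantify over finite $\overline{\s w}$. This matches the restriction to finite $\f y$ in Krause's criterion. The properness requirement $\overline{\s w}\sqsubset\s w$ is automatic for a finite subword of an infinite word.

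The main obstacle is the bookkeeping in the first step: showing that subwords in the $\sim$-sense correspond bijectively to occurrences of substrings, rather than to substrings up to equality. A string $\f y$ may occur at several positions of $\INF{\f b}$. However, Krause's criterion only asks for existence of some occurrence as a factor and some occurrence as an image, and the word version likewise quantifies existentially over the witness $\s w_3$. So the existential statements agree, and with that the equivalence, and hence the lemma, follows.
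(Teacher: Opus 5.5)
Your proposal is correct and follows the same route as the paper. The paper also argues that $\widetilde{\s W}$ sends substrings to subwords and factor/image substrings to factor/image subwords, and then reads the lemma off Krause's criterion (\Cref{prop: band brick criteria}). You additionally make explicit checks the paper leaves implicit: the basepoint bookkeeping via $\s e_\Lambda$, the compatibility of word inversion with string inversion, and why finiteness and properness of $\overline{\s w}$ match the finite-$\f y$ clause for $\INF{\f b}$.
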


\begin{proof}[Proofs of Lemmas \ref{prop: string brick characterisation for string algebra automaton} and \ref{prop: band brick characterisation for string algebra automaton}]
Under the map $\widetilde{\s W}$ defined in the proof of \Cref{prop: string are words}, substrings are sent to subwords. More precisely, if $\f x\sqsubseteq\f y$, then $\s w\sqsubseteq\s w'$, where $\s w\in\widetilde{\s W}(\f x)$ and $\s w'\in\widetilde{\s W}(\f y)$. It follows from the definition of factor and image substrings that $\f x$ is a factor (resp. image) substring of $\f y$ if and only if $\s w$ is a factor (resp. image) subword of $\s w'$. The rest of the argument follows from Theorems \ref{prop: string brick criteria} and \ref{prop: band brick criteria}.
\end{proof}


\Cref{rmk: outdegree < 2} guarantees that if $Q_1\neq\emptyset$ then the constant map $\delta:Q_1\to\{0\}$ is a local bijection for $\s M_\Lambda$ since $\Lambda$ is a string algebra. Set $\s A':=\{0\}$, and denote $0^{-1}$ by $1$ so that $(\s A')^\pm=\{0,1\}$ is the binary alphabet and $\delta^\pm:Q_1^\pm\to\{0,1\}$ is the parity map.

\begin{exmp}\label{exmp: barbell automaton delta}
\Cref{fig: barbell automaton delta} shows the underlying automaton of the MIA ${\s M_{\Gamma\delta}}$.
\end{exmp}

\begin{figure}[H]
   \[\begin{tikzcd}[ampersand replacement=\&,column sep=small, row sep=large]
	{ba_3} \&\&\&\&\&\&\& {C_3} \& {{1_{(v_6,-1)}}} \\
	\&\&\& {{1_{(v_2,1)}}} \& {{1_{(v_1,1)}}} \&\&\&\& {c_2} \\
	\& {{1_{(v_3,-1)}}} \&\& {A_1} \& b \& {c_1} \&\& {{1_{(v_4,1)}}} \\
	{a_3} \& {{1_{(v_3,1)}}} \& {a_2} \&\& {{1_{(v_4,-1)}}} \&\& {C_2} \& {1_{(v_5,1)}} \\
	\&\&\& {A_3} \&\& {c_3} \&\& {{1_{(v_5,-1)}}} \\
	\& {{1_{(v_2,-1)}}} \& {{1_{(v_1,-1)}}} \&\& B \&\& {{1_{(v_6,1)}}} \&\& {C_1} \\
	{A_2} \&\&\&\& {a_1} \&\&\& {BC_1}
	\arrow["1", from=1-1, to=1-8]
	\arrow["{0}"', from=1-8, to=2-9]
	\arrow["{0}", from=1-9, to=2-9]
	\arrow["{1}", from=2-4, to=3-4]
	\arrow["0", from=2-5, to=3-5]
	\arrow["{1}", from=2-9, to=6-9]
	\arrow["{0}", from=3-2, to=4-3]
	\arrow["0", from=3-4, to=3-5]
	\arrow["{1}", from=3-5, to=1-8]
	\arrow["{0}"', from=3-5, to=3-6]
	\arrow["{1}", from=3-6, to=4-7]
	\arrow["{1}", from=3-8, to=1-8]
	\arrow["{0}"', from=3-8, to=3-6]
	\arrow["0", from=4-1, to=1-1]
	\arrow["{0}", from=4-2, to=4-1]
	\arrow["{1}", from=4-3, to=3-4]
	\arrow["1", from=4-5, to=6-5]
	\arrow["{0}", from=4-7, to=5-6]
	\arrow["{1}", from=4-8, to=4-7]
	\arrow["{0}", from=5-4, to=4-3]
	\arrow["1", from=5-6, to=6-5]
	\arrow["{1}", from=5-8, to=6-9]
	\arrow["{1}", from=6-2, to=7-1]
	\arrow["{1}", from=6-3, to=5-4]
	\arrow["{0}", from=6-3, to=7-5]
	\arrow["{1}", from=6-5, to=5-4]
	\arrow["{0}", from=6-5, to=7-5]
	\arrow["{0}", from=6-7, to=5-6]
	\arrow["1"', from=6-9, to=7-8]
	\arrow["{0}", from=7-1, to=4-1]
	\arrow["{1}", from=7-5, to=7-1]
	\arrow["{0}", from=7-8, to=7-5]
\end{tikzcd}\]
    \caption{The underlying automaton of the MIA ${\s M_{\Gamma\delta}}$ used in \Cref{exmp: barbell automaton delta}}
    \label{fig: barbell automaton delta}
\end{figure}
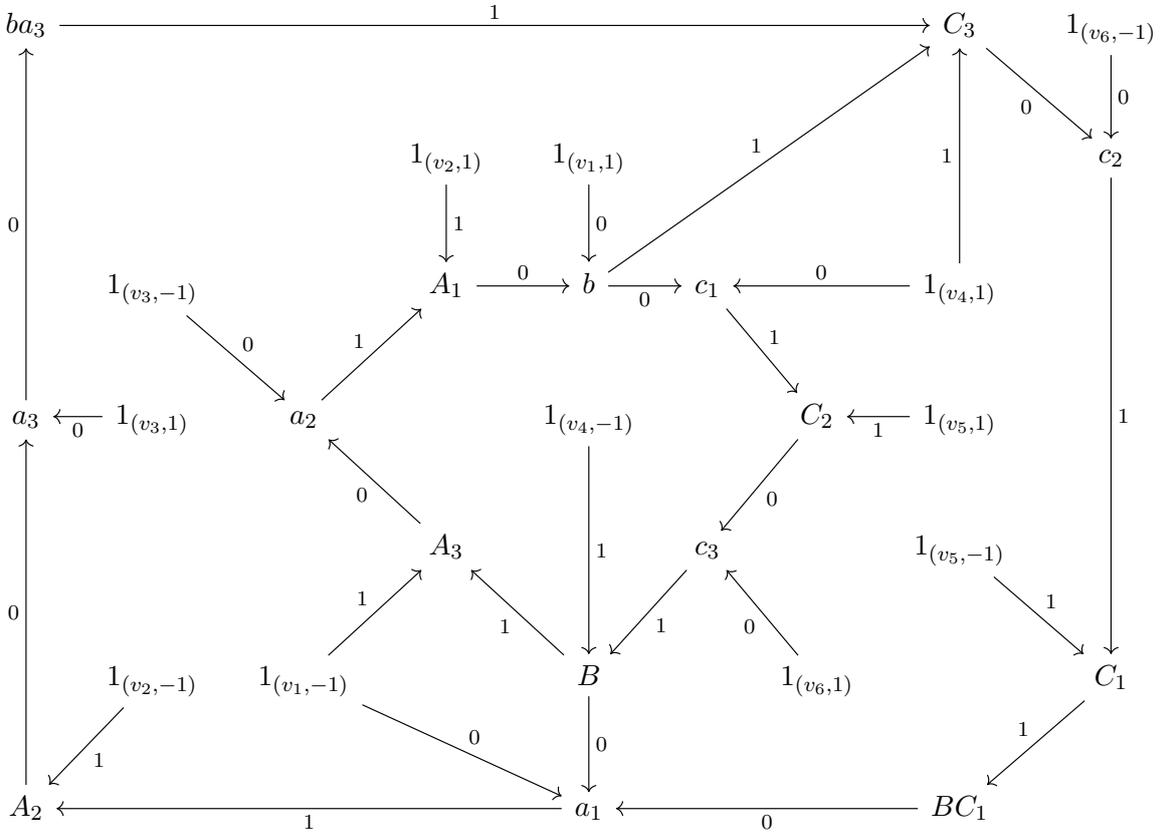

Applying \Cref{thm: (weak) bricks are bijective} to Lemmas \ref{prop: string brick characterisation for string algebra automaton} and \ref{prop: band brick characterisation for string algebra automaton}, we have the following characterisation of brick modules over string algebras in terms of binary (weak) brick words.

\begin{thm}\label{cor: string brick characterisation for string algebra automaton}
Let $\f x\in\St$. Then $M(\f x)$ is a brick if and only if $\Phi(\widetilde{\s W}(\f x))\subseteq\s{Br}(\s M_{\Lambda\delta})$.
\end{thm}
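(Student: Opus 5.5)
The plan is to combine \Cref{prop: string brick characterisation for string algebra automaton} with \Cref{thm: (weak) bricks are bijective}, applied to the local bijection $\delta:Q_1\to\{0\}$ for $\s M_\Lambda$. First I check that $\delta$ really is a local bijection in the sense of \Cref{defn: local bijection}. For a state $\f x\in{\s M_\Lambda}_0$, \Cref{rmk: outdegree < 2} says at most one direct letter $\s b\in Q_1$ and at most one inverse letter $\s b\in Q_1^{-1}$ have $\s t_\Lambda(\f x,\s b)$ defined. The parity map $\delta^\pm$ sends direct letters to $0$ and inverse letters to $1$. So its restriction to the defined letters at $\f x$ is injective. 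It is surjective onto $\{0\}$ as long as $Q_1\neq\emptyset$; if $Q_1=\emptyset$ everything is trivial and I dispose of it separately. This is what makes $\s M_{\Lambda\delta}$ an MIA and gives the bijection $\Phi:\s W(\s M_\Lambda)\to\s W(\s M_{\Lambda\delta})$ of \Cref{prop: words are in bijection after changing alphabet}.

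The main step is then a chain of equivalences. By \Cref{prop: string brick characterisation for string algebra automaton}, $M(\f x)$ is a brick if and only if $\widetilde{\s W}(\f x)\subseteq\s{Br}(\s M_\Lambda)$. Here $\widetilde{\s W}(\f x)$ is a $\sim$-class of words over $\s M_\Lambda$. By \Cref{thm: (weak) bricks are bijective}, $\Phi$ restricts to a bijection $\s{Br}(\s M_\Lambda)\cong\s{Br}(\s M_{\Lambda\delta})$, so it both preserves and reflects brick words. Hence a word $\s w\in\s W(\s M_\Lambda)$ lies in $\s{Br}(\s M_\Lambda)$ if and only if $\Phi(\s w)\in\s{Br}(\s M_{\Lambda\delta})$. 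Applying this elementwise, $\widetilde{\s W}(\f x)\subseteq\s{Br}(\s M_\Lambda)$ holds if and only if $\Phi(\widetilde{\s W}(\f x))\subseteq\s{Br}(\s M_{\Lambda\delta})$, which is the statement.

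One point needs care: that membership in $\s{Br}$ is checked consistently on both sides, in particular the aperiodicity clause in \Cref{defn: brick word}. The concern is that $\s S\delta$ might turn an aperiodic word over $Q_1^\pm$ into a periodic binary word. To rule this out I will use that $\Phi$ is a bijection commuting with the transition data. Suppose $\s S\delta(\s w^{\R l}\s w^{\R r})$ had a periodic tail. Reading it from a state recurring at the start of each period, determinism of $\s M_{\Lambda\delta}$ forces the original tail to be periodic too, since $\s M_\Lambda$ has finitely many states. I expect this to be the only genuinely delicate point, and it is already implicitly covered by the proof of \Cref{thm: (weak) bricks are bijective}. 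I would also note that $\Phi$ maps $\sim$-classes to $\sim$-classes, so $\Phi(\widetilde{\s W}(\f x))$ is the associated equivalence class in $\s M_{\Lambda\delta}$, matching the phrasing of the main theorem in the introduction.
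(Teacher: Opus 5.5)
Your proposal is correct and is essentially the paper's own argument: the paper obtains the theorem by applying \Cref{thm: (weak) bricks are bijective} to the local bijection $\delta$, justified via \Cref{rmk: outdegree < 2}, and combining it with \Cref{prop: string brick characterisation for string algebra automaton}. Your extra pigeonhole argument, that $\s S\delta$ preserves and reflects aperiodicity because $\s M_{\Lambda\delta}$ is deterministic with finitely many states, is a sound supplement to what the paper leaves as ``readily verified'' in the proof of that corollary.
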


\begin{thm}\label{cor: band brick characterisation for string algebra automaton}
Let $\f b\in\Ba$, $l\in\mathbb N$, and $\lambda\in\C K^\times$. Then $B(\f b,l,\lambda)$ is a brick if and only if $l=1$ and $\Phi(\widetilde{\s W}(\,^\infty\f b^\infty))\subseteq\s{Br}^{\R{weak}}(\s M_{\Lambda\delta})$.
\end{thm}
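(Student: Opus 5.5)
The statement for bands follows by combining \Cref{prop: band brick characterisation for string algebra automaton} with \Cref{thm: (weak) bricks are bijective}. I will first check that the latter applies to $\s M_\Lambda$ with $\varphi=\delta$. As noted just before \Cref{exmp: barbell automaton delta}, \Cref{rmk: outdegree < 2} says that from any state of $\s M_\Lambda$ there is at most one outgoing transition labelled by a direct syllable and at most one labelled by an inverse syllable. Hence $\delta^\pm$ is injective on the set of syllables $\s b$ for which $\s t_\Lambda(v,\s b)$ is defined, so $\delta$ is a local bijection for $\s M_\Lambda$ and $\s M_{\Lambda\delta}$ is an MIA. (If $Q_1=\emptyset$ there are no bands, and the statement is vacuous.)

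Next I fix $\f b\in\Ba$, $l\in\mathbb N$ and $\lambda\in\C K^\times$. By \Cref{prop: band brick characterisation for string algebra automaton}, $B(\f b,l,\lambda)$ is a brick if and only if $l=1$ and every word in the $\sim$-class $\widetilde{\s W}(\INF{\f b})$ lies in $\weakbrick{M_\Lambda}$. By \Cref{thm: (weak) bricks are bijective}, the bijection $\Phi\colon\s W(\s M_\Lambda)\to\s W(\s M_{\Lambda\delta})$ restricts to a bijection $\weakbrick{M_\Lambda}\cong\weakbrick{M_{\Lambda\delta}}$. So for each word $\s w\in\s W(\s M_\Lambda)$ we have $\s w\in\weakbrick{M_\Lambda}$ if and only if $\Phi(\s w)\in\weakbrick{M_{\Lambda\delta}}$. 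Applying this pointwise gives
\[
\widetilde{\s W}(\INF{\f b})\subseteq\weakbrick{M_\Lambda}\iff\Phi\bigl(\widetilde{\s W}(\INF{\f b})\bigr)\subseteq\weakbrick{M_{\Lambda\delta}},
\]
and chaining the two equivalences yields the theorem.

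The only point needing care is that the ``if'' direction uses reflection, not just preservation. That is, $\Phi(\s w)\in\weakbrick{M_{\Lambda\delta}}$ must force $\s w\in\weakbrick{M_\Lambda}$. This is supplied by the restriction of $\Phi$ in \Cref{thm: (weak) bricks are bijective} being a bijection onto $\weakbrick{M_{\Lambda\delta}}$, together with the injectivity of $\Phi$ on all of $\s W(\s M_\Lambda)$ from \Cref{prop: words are in bijection after changing alphabet}. Both results are stated in the paper, so I expect no real obstacle: the argument is a direct composition of the two results.
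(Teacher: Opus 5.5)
Your proposal is correct and follows the paper's own route: the paper obtains this theorem by applying \Cref{thm: (weak) bricks are bijective}, with $\varphi=\delta$ (a local bijection by \Cref{rmk: outdegree < 2}), to \Cref{prop: band brick characterisation for string algebra automaton}, exactly as you do. Your remarks on the reflection direction, via injectivity of $\Phi$, and on the vacuous case $Q_1=\emptyset$ only make explicit details the paper leaves implicit.
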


\section{Connection with Sturmian words}\label{sec:connection}
Consider the double Kronecker algebra $\Lambda_3$ (\Cref{fig:lambda_n1}). Let $\f x\in\overleftrightarrow{\f{St}}(\Lambda_3)$ be an aperiodic string. With the assumption of aperiodicity, \Cref{cor: string brick characterisation for string algebra automaton} yields that the string module $M(\f x)$ is not a brick module if and only if $\Phi(\widetilde{\s W}(\f x))\not\subseteq\s{Br}^{\R{weak}}(\s M_{\Lambda_3\delta})$. In other words, there is $\s w'\in\{0,1\}^*$ such that both $0\s w'1$ and $1\s w'0$ are subwords of the underlying word $\s w(\f x)\in\overleftrightarrow{\{0,1\}}^*$ of $\Phi(\widetilde{\s W}(\f x))$. (An observation of the underlying quiver of $\Lambda_3$ yields a cyclic string $\f x'$ with $s(\f x')=t(\f x')=v_2$ such that $\s w(\f x')=\s w'$.) Given that $\f x$ is bi-infinite, this happens if and only if both $10\s w'10$ and $01\s w'01$ are subwords of $\s w(\f x)$.

Let $\s a:=b_1A_1,\s b:=A_2b_2\in\f{St}(\Lambda_3)$ so that $\s w(\f x)$ could be written as a (bi-infinite) word $\s z(\f x)$ over $\{\s a,\s b\}$. Using this new notation, the condition in the last line of the above paragraph translates to the existence of a finite word $\s y'$ over $\{\s a,\s b\}$ such that both $\s a\s y'\s a$ and $\s b\s y'\s b$ are subwords of $\s z(\f x)$. In view of \Cref{defn: Sturmian word}, the above discussion could be summarised as Part $(1)$ of the following corollary of \Cref{cor: string brick characterisation for string algebra automaton}--the second part admits a similar proof thanks to \Cref{defn: Sturmian word}.
\begin{cor}[A restatement of \Cref{thm: deaconu}]\label{cor: DMP}
Let $\f x\in{\f{St}}(\Lambda_3)$ be infinite and aperiodic.
\begin{enumerate}
    \item If $\f x$ is bi-infinite, then the module $M(\f{x})$ is a brick if and only if $\s z(\f x)$ is a Sturmian word over $\{\s a,\s b\}$.
    \item If $\s w$ is right-infinite and $s(\f x)=v_2$, then the module $M(\f{x})$ is a brick if and only if $\s z(\f x)$ is a characteristic Sturmian word over $\{\s a,\s b\}$.
\end{enumerate}
\end{cor}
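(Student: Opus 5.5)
Part (1) is essentially the discussion preceding the statement, so the plan is to make that chain of equivalences precise and then adapt it to the right-infinite case. For a bi-infinite aperiodic $\f x$, \Cref{cor: string brick characterisation for string algebra automaton} says $M(\f x)$ is a brick if and only if $\Phi(\widetilde{\s W}(\f x))\subseteq\s{Br}(\s M_{\Lambda_3\delta})$. Aperiodicity is assumed, so only the factor/image clause matters. I will first record the shape of $\s M_{\Lambda_3\delta}$: in $\Lambda_3$ every vertex has one letter $0$ and one letter $1$ available, and the relations force strings to alternate between ``$a$-type'' and ``$b$-type'' arrows. Hence a finite binary word $\s w'$ determines the string up to the basepoint state, and the gap labels along $\s w(\f x)$ are determined by parity of position within the blocks $\s d(\s a)=01$, $\s d(\s b)=10$.

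The core step translates ``factor subword and image subword of $\s w$ or $\s w^{-1}$'' into a statement about subwords. A common factor/image subword $\overline{\s w}$ with underlying word $\s w'$ occurs in one position flanked as $1\s w'0$ (factor: inverse letter on the left, direct letter on the right) and in another position flanked as $0\s w'1$ (image), both as subwords of $\s w(\f x)$. Here $\f x$ is bi-infinite, so every occurrence has neighbours on both sides and the one-sided or whole-word cases of \Cref{defn: factor and image subword} do not arise. The occurrence in $\s w^{-1}$ becomes, after inverting, an occurrence in $\s w$ with $0\leftrightarrow1$ exchanged and the word reversed. I expect this to be handled by the symmetry $\s d(\s a)^{-1}=\s d(\s b)$ with reversal, and by the fact that the Sturmian condition is invariant under reversal. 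I must also check that the state labels of $\overline{\s w}$ in both occurrences agree, so that they really are the same pointed word. This amounts to showing that $\s w'$ corresponds to a cyclic string at $v_2$ (the parenthetical remark), which follows because $1\s w'0$ and $0\s w'1$ both read through the automaton force $\s w'$ to begin and end at block boundaries.

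Once $\s w'$ is aligned with block boundaries, $1\s w'0$ extends uniquely, within a bi-infinite word of blocks, to $10\s w'10$, and $0\s w'1$ extends to $01\s w'01$. Writing $\s w'=\s d(\s y')$ then gives $\s b\s y'\s b$ and $\s a\s y'\s a$ as subwords of $\s z(\f x)$. Conversely, these two subwords give $1\s w'0$ and $0\s w'1$ directly. By \Cref{defn: Sturmian word}, and using that aperiodicity of $\f x$ matches aperiodicity of $\s z(\f x)$, this proves (1). The main obstacle is the alignment claim: a witness $\s w'$ might start in the middle of a block. I would handle this by using the transitions of $\s M_{\Lambda_3\delta}$: a $1$ followed by $0$ occurs only inside $\s d(\s b)$ or across blocks $\s a\s b$, and similarly for $0$ then $1$. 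Requiring the same initial state at both occurrences should then pin down the block position.

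For (2), $\f x$ is right-infinite and starts at $v_2$, so the word is $\s z(\f x)$ with a left end. The new feature is that a witness may now be a left subword, where the factor or image condition holds with only a right neighbour (the second bullet cases). Such a witness at the left end corresponds exactly to an occurrence of $\s a\s y'\s a$ or $\s b\s y'\s b$ inside $\s c\,\s z(\f x)$ for $\s c\in\{\s a,\s b\}$. So the absence of all witnesses is equivalent to $\s z(\f x)$, $\s a\s z(\f x)$ and $\s b\s z(\f x)$ all being Sturmian, which is the characteristic condition in \Cref{defn: Sturmian word}. I would carry out this step by the same block-alignment argument, with the left boundary treated as a virtual letter that can be either $0$ or $1$.
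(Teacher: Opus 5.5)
Your route is the paper's: the paragraph before the corollary applies \Cref{cor: string brick characterisation for string algebra automaton}, turns a common factor/image subword into occurrences $1\s w'0$ and $0\s w'1$, extends them to whole blocks using bi-infiniteness, and reads off $\s a\s y'\s a$ and $\s b\s y'\s b$; part (2) is only called ``similar''. You are more careful than the paper about alignment, and rightly so. In $\s d(\s b\s a)=1001$ both $100$ and $001$ occur with $\s w'=0$, but the middle letters are the different strings $b_2$ and $b_1$, so only the states make this work. Your treatment of the left end in (2), as a prepended letter $\s c\in\{\s a,\s b\}$, is correct.

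The step that fails as written is your handling of image occurrences in $\f x^{-1}$, which the paper's sketch silently omits. The identity $\s d(\s a)^{-1}=\s d(\s b)$ is false: $(01)^{-1}=1^{-1}0^{-1}=01$, so $\s d(\s a)^{-1}=\s d(\s a)$ and $\s d(\s b)^{-1}=\s d(\s b)$. Working with binary words alone, the $\f x^{-1}$ clause would then give mixed pairs $\s a\s y'\s a$, $\s b(\s y')^{R}\s b$, where $R$ is reversal. Reversal-invariance of Sturmianity does not reduce these to \Cref{defn: Sturmian word}; you would need closure of the factor set of a Sturmian word under reversal. Taken literally, your identity would even count the pair $\s a\s b\s a$, $\s b\s a\s b$ as a witness, and both are factors of the Fibonacci word.

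The fix uses states, as in your alignment step. Since $v_1$ is a sink and $v_3$ a source, an occurrence starting at $v_1$ is always preceded by a direct syllable, and one starting at $v_3$ by an inverse one; dually at the right end. Hence a common factor/image subword begins and ends at $v_2$-gaps. Every $v_2$-gap of $\f x$ is followed by $b_1$ or $a_2^{-1}$ and carries the state $1_{(v_2,\varsigma(a_1))}$. By contrast, $\f x^{-1}$ is tiled by $\s a^{-1}=a_1b_1^{-1}$ and $\s b^{-1}=b_2^{-1}a_2$, which have the same binary words as $\s a,\s b$. So its $v_2$-gaps are followed by $a_1$ or $b_2^{-1}$ and carry $1_{(v_2,-\varsigma(a_1))}$. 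Thus the $\f x^{-1}$ clause never supplies a witness, empty or not, in (1) or (2).

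Two harmless slips remain. First, $10$ straddles a block boundary only at $\s a\s a$, not $\s a\s b$. Second, $1\s w'0$ extends to $01\s w'01=\s d(\s a\s y'\s a)$ and $0\s w'1$ to $10\s w'10=\s d(\s b\s y'\s b)$, the reverse of what you wrote. Since both occurrences are required, and in (2) both $\s a\s z(\f x)$ and $\s b\s z(\f x)$ must be Sturmian, nothing downstream changes.
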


\section{Recovering a string algebra from its associated MIA}\label{sec: recover string alg}
This short section is intended to give a prescription for recovering a string algebra $\Lambda=\C KQ/\ang{\rho}$ from $\s M_{\Lambda\delta}$.

\begin{defn}\label{defn: string alg from MIA}
Let $\s M:=(\s M_0,\s I,\s e,(-)^{-1},\{0\},\s t)$ be an MIA. We construct a quiver $Q_\s M$ whose vertex set is ${Q_{\s M}}_0:=\s I/\approx,\text{ where }\approx\text{ is the smallest equivalence relation such that }v\approx v^{-1}\text{ for each }v\in\s I$, and where the number of arrows from $[v]_\approx$ to $[v']_\approx$ equals the cardinality of the set $$\{(v_1,v'_1)\in\s I\times\s I\mid v_1\approx v, v'_1\approx v',v'_1=\s e(\s t(v_1,0))\}.$$ The arrow $[v]_\approx\to[v']_\approx$ contributed by $v'_1=\s e(\s t(v_1,0))$ will be denoted by $a_{(v_1,v'_1)}$.
\end{defn}

\begin{rmk}
For every $[v]_\approx\in {Q_\s M}_0$, there are at most two arrows $\alpha,\beta$ in $Q_{\s M}$ such that $s(\alpha)=s(\beta)=[v]_\approx$; however a similar statement may not be true for arrows with target $[v]_\approx$.
\end{rmk}

The set
\begin{align*}
    \rho_\s M&:=\{p=a_{(v_1,v'_1)}a_{(v_2,v'_2)}\cdots a_{(v_k,v'_k)}\mid p\text{ is a path in }Q, k\geq2\text{ and }\widetilde{\s t}(v_1,0^k)\text{ is undefined}\}
\end{align*}
of paths defines a set of relations on the quiver $Q_{\s M}$.

\begin{thm}\label{prop: string alg from MIA}
Let $\Lambda$ be a string algebra. Then $\C KQ_{{\s M_{\Lambda\delta}}}/\ang{\rho_{{\s M_{\Lambda\delta}}}}\cong\Lambda$.
\end{thm}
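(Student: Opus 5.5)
We will construct an explicit isomorphism of quivers $Q_{\s M_{\Lambda\delta}}\cong Q$ that carries $\rho_{\s M_{\Lambda\delta}}$ onto a set of relations generating the same ideal as $\ang{\rho}$. Write $\s M:=\s M_{\Lambda\delta}$. We may assume $Q$ has no isolated vertices, or else add each isolated vertex as its own $\approx$-class.

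\textbf{Vertices.} Recall $\s I=\{1_{(v,1)},1_{(v,-1)}\mid v\in Q_0\}$ and $1_{(v,i)}^{-1}=1_{(v,-i)}$. So the relation $\approx$ identifies precisely the two trivial strings at each vertex, and $[1_{(v,i)}]_\approx\mapsto v$ is a bijection ${Q_{\s M}}_0\to Q_0$.

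\textbf{Arrows.} In $\s M$ the letter $0$ stands for a direct syllable, by the construction of $\s M_\varphi$ from the local bijection $\delta$. Hence $\s t(1_{(v,i)},0)$ is defined if and only if there is an arrow $\alpha\in Q_1$ with $s(\alpha)=v$ and $\varsigma(\alpha)=-i$, that is, $1_{(v,i)}\alpha$ is a string. In that case the transition leads to the state $\alpha$, and $\s e(\alpha)=1_{(t(\alpha),\varepsilon(\alpha))}$. By property (a) of $\varsigma$, the arrows starting at $v$ correspond bijectively to the $i\in\{\pm1\}$ for which $\s t(1_{(v,i)},0)$ is defined. Therefore the pairs $(v_1,v_1')=(1_{(v,-\varsigma(\alpha))},1_{(t(\alpha),\varepsilon(\alpha))})$ are in bijection with $Q_1$ via $a_{(v_1,v_1')}\mapsto\alpha$. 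This map respects sources and targets. We also need it to be injective when two parallel arrows $\alpha,\beta$ share both endpoints. This holds because $\varsigma(\alpha)\neq\varsigma(\beta)$, so the pairs differ in their first coordinate.

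\textbf{Relations.} Let $p=\alpha_1\cdots\alpha_k$ with $k\ge2$ be a path in $Q$, and let $a_{(v_j,v_j')}$ be the arrow corresponding to $\alpha_j$. The key claim is the following: for a path $\alpha_1\cdots\alpha_m$ that is a string, $\widetilde{\s t}(v_1,0^m)$ is defined and is a state whose transition on $0$ is by $\alpha_{m+1}$, if at all. To prove it, follow the definition of $\s t_\Lambda$. Reading $0$ from a state $\f y$ appends the unique direct syllable $\gamma$ with $\f y\gamma\in\St$; this $\gamma$ is unique by \Cref{rmk: outdegree < 2}. The result is then truncated to the maximal right substring lying in ${\s M_\Lambda}_0$. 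The main obstacle is showing that truncation loses no information: we must check that $\f y\gamma\in\St$ if and only if the full path read so far, extended by $\gamma$, has no subpath in $\rho$.

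This is where the definition of ${\s M_\Lambda}_0$ enters: it contains all proper left substrings of relations. I would induct on $m$. The inductive hypothesis is that the current state is the longest right substring of $\alpha_1\cdots\alpha_m$ that is a proper left substring of some relation, or is the single syllable $\alpha_m$. Any relation $r$ ending at $\alpha_{m+1}$ inside $\alpha_1\cdots\alpha_{m+1}$ has $r$ minus its last letter among these candidates, so it appears inside the state concatenated with $\gamma$. Conversely, condition \ref{at most one arrow} of \Cref{defn:string alg} forces $\gamma=\alpha_{m+1}$ whenever that concatenation is a string.

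Given the claim, $\widetilde{\s t}(v_1,0^k)$ is undefined exactly when $p$ contains a subpath in $\rho$. Hence $\rho_{\s M}$ corresponds to the set of paths of length at least $2$ having a subpath in $\rho$. Relations in $\rho$ have length at least $2$: this is the standard convention for string algebras, since the paths of $\rho$ are not arrows. Therefore $\rho\subseteq\rho_{\s M}$ up to the identification, and every element of $\rho_{\s M}$ lies in $\ang{\rho}$. So the two ideals coincide, and the isomorphism $\C KQ_{\s M}\cong\C KQ$ descends to the quotients.
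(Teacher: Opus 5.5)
Your vertex and arrow steps agree with the paper's proof and are correct. The remark about isolated vertices is unnecessary, since $1_{(v,\pm1)}\in\s I$ for every $v\in Q_0$.

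\textbf{The gap.} It is in the relations step, at the sentence ``condition (II) forces $\gamma=\alpha_{m+1}$ whenever that concatenation is a string.'' Condition (II) only says that at most one arrow $\gamma$ satisfies $\alpha_m\gamma\notin\rho$. If $\alpha_m\alpha_{m+1}\in\rho$, that $\gamma$ is the \emph{other} arrow leaving $t(\alpha_m)$.

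The underlying problem is that $\widetilde{\s t}(v_1,0^k)$ depends only on $v_1$ and $k$, not on $\alpha_2,\dots,\alpha_k$. Because $\s M$ is deterministic over $\{0\}$, reading $0^k$ from $v_1$ traces the unique string of direct syllables beginning with $\alpha_1$, and this need not be $p$. So only one half of your equivalence holds. If $p$ is a string, your truncation induction shows the automaton traces $p$. Hence every element of $\rho_{\s M}$ has a subpath in $\rho$, and $\ang{\rho_{\s M}}\subseteq\ang{\rho}$. The inclusion $\rho\subseteq\rho_{\s M}$, however, is false.

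\textbf{Counterexample.} Take $Q$ with arrows $\alpha\colon1\to2$ and $\beta,\gamma\colon2\to3$, and $\rho=\{\alpha\gamma\}$; this is a gentle algebra. Property (c) of $\varsigma,\varepsilon$ forces $\varepsilon(\alpha)=-\varsigma(\beta)$. So from $v_1=1_{(1,-\varsigma(\alpha))}$ the automaton reads $00$ as $\alpha\beta$, and $\widetilde{\s t}(v_1,00)=\beta$ is defined. Both length-two paths start at $v_1$, so $\rho_{\s M}=\emptyset$. Then $\C KQ_{\s M}/\ang{\rho_{\s M}}=\C KQ$ is $8$-dimensional, whereas $\Lambda$ is $7$-dimensional.

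\textbf{Consequence and repair.} This step cannot be repaired as stated. The paper's proof makes the identical assertion (``for a path $p\in\rho$ \dots\ $\widetilde{\s t}(v_1,0^k)$ is undefined''), and the statement is false for $\rho_{\s M}$ as literally defined.

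What your argument does prove is a corrected version in which $\rho_{\s M}$ records the paths the automaton fails to trace. Put $p=a_{(v_1,v'_1)}\cdots a_{(v_k,v'_k)}$, with $k\ge2$, in $\rho_{\s M}$ unless $\widetilde{\s t}(v_1,0^k)$ is defined and $\s e(\widetilde{\s t}(v_1,0^j))=v_{j+1}$ for all $1\le j<k$. Two facts make this condition say exactly that reading $0^k$ from $v_1$ traces $p$:
\begin{itemize}
\item from a state $u$, the letter $0$ can only be read as the arrow whose pair has first coordinate $\s e(u)$;
\item arrows of $Q_{\s M}$ are determined by their first coordinate.
\end{itemize}
Your truncation lemma then finishes the job. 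It says: if $u$ is the maximal right substring of the string $\alpha_1\cdots\alpha_m$ lying in ${\s M_\Lambda}_0$, then $u\gamma\in\St$ if and only if $\alpha_1\cdots\alpha_m\gamma\in\St$, and the new state is again maximal. So $p$ is traced if and only if $p$ has no subpath in $\rho$. After that, your final paragraph goes through unchanged.
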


\begin{proof}
For each vertex $v\in Q_0$, we have that $1_{(v,1)}$ and $1_{(v,-1)}$ are in $\s I$. Therefore, ${Q_{{\s M_{\Lambda\delta}}}}_0\cong Q_0$. Note that $\alpha\in Q_1$ if and only if $1_{(s(\alpha),-\varsigma(\alpha))}\alpha1_{(t(\alpha),\varepsilon(\alpha))}\in\St$ if and only if the tuple $(1_{(s(\alpha),-\varsigma(\alpha))},0,\alpha)\in{\s t_{\Lambda}}_\delta$ with ${\s e_{\Lambda}}_\delta(\alpha)=1_{(t(\alpha),\varepsilon(\alpha))}$. Hence, ${Q_{{\s M_{\Lambda\delta}}}}_1\cong Q_1$.

For a path $p=a_{(v_1,v'_1)}a_{(v_2,v'_2)}\cdots a_{(v_k,v'_k)}\in\rho$ with $k\geq 2$, where $v_1=1_{(s(p),-\varsigma(p))}$, it follows from the definition of $\s t_{{\s M_{\Lambda\delta}}}$ that $\widetilde{\s t}(v_1,0^k)$ is undefined. Also note that any path $p'$ in $\rho_{{\s M_{\Lambda\delta}}}$ has a left substring $p''$ in $\rho$. Therefore, we have $\ang{\rho}=\ang{\rho_{{\s M_{\Lambda\delta}}}}$.
\end{proof}
\bibliographystyle{alpha}
\bibliography{main}
\end{document}